\newcommand{\sA}{\mathcal{A}}
\newcommand{\sM}{\mathcal{M}}
\renewcommand{\AA}{\mathbb{A}}
\newcommand{\CC}{\mathbb{C}}
\newcommand{\FF}{\mathbb{F}}
\newcommand{\PP}{\mathbb{P}}
\newcommand{\QQ}{\mathbb{Q}}
\newcommand{\RR}{\mathbb{R}}
\newcommand{\ZZ}{\mathbb{Z}}
\newcommand{\sep}{\mathrm{sep}}
\newcommand{\ksep}{k^\sep}
\newcommand{\Sigmabar}{\overline{\Sigma}}
\newcommand{\Gammabar}{\overline{\Gamma}}
\DeclareMathOperator{\He}{He}
\DeclareMathOperator{\Gal}{Gal}
\DeclareMathOperator{\Aut}{Aut}
\DeclareMathOperator{\Br}{Br}
\DeclareMathOperator{\Jac}{Jac}
\DeclareMathOperator{\Kum}{Kum}
\DeclareMathOperator{\Res}{Res}
\DeclareMathOperator{\Ob}{Ob}
\DeclareMathOperator{\GL}{GL}
\DeclareMathOperator{\SL}{SL}
\DeclareMathOperator{\PSp}{PSp}
\DeclareMathOperator{\PGSp}{PGSp}
\DeclareMathOperator{\PGL}{PGL}
\DeclareMathOperator{\Sp}{Sp}
\DeclareMathOperator{\Sym}{Sym}
\DeclareMathOperator{\HH}{H}
\newtheorem{theorem}{Theorem}[section]
\newtheorem{prop}[theorem]{Proposition}
\newtheorem{lemma}[theorem]{Lemma}
\theoremstyle{definition}
\newtheorem{remark}[theorem]{Remark}
\newtheorem{question}[theorem]{Question}
\begin{document}
\title{Twists of the Burkhardt Quartic Threefold}
\date{July 22, 2022}
\author{Nils Bruin}
\address[1]{Department of Mathematics, Simon Fraser University, Burnaby, BC, V5A 1S6, Canada}
\email{nbruin@sfu.ca}
\author{Eugene Filatov}
\address[2]{Department of Pure Mathematics, University of Toronto}
\email{eugene.filatov@utoronto.ca}

\thanks{The first author acknowledges the support of the Natural Sciences and Engineering Research Council of Canada (NSERC), funding reference number RGPIN-2018-04191.}


\subjclass[2010]{14H10, 14K10, 11G10, 11G18}
\keywords{Arithmetic of algebraic curves, Algebraic moduli, Field of definition obstructions, Kummer surfaces, Rational points}

\begin{abstract} We study twists of the Burkhardt quartic threefold over non-algebraically closed base fields of characteristic different from $2,3,5$. We show they all admit quartic models in projective four-space. We identify a Galois-cohomological obstruction that measures if a given twist is birational to a moduli space of abelian varieties.
This obstruction has implications for the rational points on these varieties. As a result, we see that all possible $3$-level structures can be realized by abelian surfaces, whereas Kummer $3$-level structures that group-theoretically may be admissible, may not be realizable over certain base fields. We give an example of a Burkhardt quartic over a bivariate function field whose desingularization has no rational points at all.
	
Our methods are based on the representation theory of $\Sp_4(\FF_3)$, Galois cohomology, and the classical algebraic geometry of the Burkhardt quartic. 
\end{abstract}
\maketitle
\section{Introduction and results}

The Burkhardt quartic threefold
\[B^{(1)}\colon y_0 (y_0^3 + y_1^3 + y_2^3 + y_3^3 + y_4^3 ) + 3y_1 y_2 y_3 y_4=0,\]
has received significant study both classically over $\CC$ (see \cites{Klein1888, Burkhardt1891, Maschke1889, Coble1917, Hunt96}) and more recently arithmetically. For instance, in \cites{BruinNasserden2018, CalChid2020} the rationality and non-rationality of certain twists of the Burkhardt quartic over $\QQ$ is established, while in \cites{CalChidRob2020, BoxCalGeePil2021} it is remarked that twists of the Burkhardt quartic that parametrize abelian surfaces, are unirational.

In this paper, we consider twists of $B^{(1)}$ over base fields $k$ of characteristic distinct from $2,3,5$. By a \emph{twist of $B^{(1)}$} we mean a variety $B$ over $k$ that, when base changed to a separable closure $\ksep$, is isomorphic to $B^{(1)}$. We refer to such a variety $B$ as \emph{a} Burkhardt quartic over $k$. This terminology suggests that $B$ can indeed be realized as a quartic threefold in $\PP^4$. This is true, but requires proof. It, and some other basic facts, follows quite directly from the representation theory of $\Sp_4(\FF_3)$. We collect these in the following theorem. See Section~\ref{S:proof_burkhardt_rep} for the proof.

\begin{restatable}{theorem}{thmburkhardtrep}
\label{T:burkhardt_rep}%
Let $k$ be a field of characteristic distinct from $2,3,5$ and let $B$ be a twist of $B^{(1)}$.
	\begin{enumerate}[label=(\alph*)]
		\item \label{rep:part1} $B$ admits a quartic model in $\PP^4$ with $45$ singularities.
		\item \label{rep:part2} $B$ comes equipped with a rational map $\pi\colon M\to B$ of generic degree $6$, where $M$ is a Brauer-Severi variety of dimension $3$.
		We write $\Ob(B)$ for the class of $M$ in $\Br(k)$.
		\item \label{rep:part3} $\Ob(B)\in \Br(k)$ is of period dividing $2$ and of index dividing $4$.
	\end{enumerate}
\end{restatable}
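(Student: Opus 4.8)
The plan is to derive all three statements from the linear representation theory of the automorphism group together with the connecting maps of Galois cohomology. Recall that $G=\Aut_{\ksep}(B^{(1)})$ is the simple Burkhardt group $\PSp_4(\FF_3)$, of order $25920=2^6\cdot 3^4\cdot 5$. Because $\mathrm{char}(k)\nmid|G|$—precisely the role of the hypothesis $\mathrm{char}(k)\neq 2,3,5$—the automorphism scheme is étale, so twists of $B^{(1)}$ are classified by $\HH^1(k,G)$, and a cocycle class $[c]$ produces the twist $B={}_cB^{(1)}$. The two representations I would use both arise from the Weil representation of the double cover $\Sp_4(\FF_3)$, of dimension $3^2=9$, which splits into its even and odd parts $V_5\oplus V_4$ of dimensions $(3^2+1)/2=5$ and $(3^2-1)/2=4$: the central element $-1$ acts trivially on $V_5$, so $V_5$ is a genuine representation of $G$, whereas it acts by $-\mathrm{id}$ on the symplectic space $V_4$, so $V_4$ descends only to a projective representation of $G$.

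For part \ref{rep:part1}, the embedding $B^{(1)}\hookrightarrow\PP^4=\PP(V_5)$ is equivariant for the projective representation $\bar\rho_5\colon G\to\PGL_5$, and the key point is that $\bar\rho_5$ lifts to a linear representation $\rho_5\colon G\to\GL_5$ of the $k$-group scheme $G$. Granting this, twisting by $[c]$ replaces $\PP^4$ by the Brauer--Severi variety ${}_c\PP(V_5)$, whose class in $\Br(k)$ is the image of $[c]$ under
\[
\HH^1(k,G)\xrightarrow{\ \bar\rho_5\ }\HH^1(k,\PGL_5)\xrightarrow{\ \delta\ }\Br(k).
\]
Since $\bar\rho_5$ factors through $\GL_5$, the class $\bar\rho_{5*}[c]$ lifts to $\HH^1(k,\GL_5)=0$ (Hilbert~90) and is therefore trivial, so ${}_c\PP(V_5)\cong\PP^4$ as a standard projective space. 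The $G$-invariant quartic then descends to a quartic model of $B$ in $\PP^4$; its $45$ singular points, being a geometric invariant, descend as a Galois-stable set.

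For parts \ref{rep:part2} and \ref{rep:part3}, the classical geometry of the Burkhardt quartic provides a $G$-equivariant rational map $\PP^3=\PP(V_4)\dashrightarrow B^{(1)}\subset\PP(V_5)$ of generic degree $6$, given by an equivariant linear system of quadrics. This map is equivariant for $\bar\rho_4\colon G\to\PGL_4$, which—by the discussion above—lifts only to the double cover $\Sp_4(\FF_3)$ and not to $G$ itself. Twisting by $[c]$ yields a $3$-dimensional Brauer--Severi variety $M={}_c\PP(V_4)$ together with the descended map $\pi\colon M\to B$ of generic degree $6$, and one sets $\Ob(B)=[M]\in\Br(k)$. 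The index of $[M]$ divides $4$ because $M$ is a $3$-dimensional Brauer--Severi variety, hence corresponds to a central simple algebra of degree $4$. For the period, I would use that $V_4$ is symplectic: the image of $\bar\rho_4$ lies in $\PSp_4\subset\PGL_4$, and the morphism of central extensions $\bigl(1\to\mu_2\to\Sp_4\to\PSp_4\to1\bigr)\hookrightarrow\bigl(1\to\mathbb{G}_m\to\GL_4\to\PGL_4\to1\bigr)$ identifies $\Ob(B)$ with the image of a class in $\HH^2(k,\mu_2)$ under $\mu_2\hookrightarrow\mathbb{G}_m$, which lands in $\Br(k)[2]$; equivalently, the symplectic form makes $M$ self-dual, so $[M]=-[M]$. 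Either way the period divides $2$.

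The main obstacle I anticipate is the representation-theoretic and geometric input underlying parts \ref{rep:part1} and \ref{rep:part2}: verifying that the $5$-dimensional even Weil summand is linear over the $k$-group scheme $G$ (so that the ambient $\PP^4$ genuinely untwists) while the $4$-dimensional odd summand is only projective, and exhibiting the equivariant degree-$6$ parametrization $\PP(V_4)\dashrightarrow B^{(1)}$ concretely. With these in place, the remaining work is the cohomological bookkeeping—checking that each twisting diagram commutes at the level of cocycles so that the connecting maps really compute $\Ob(B)$—which is routine but must be carried out with care over a general base field, where the Galois action on $G$ may be nontrivial.
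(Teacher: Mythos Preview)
Your approach is correct and closely parallels the paper's. The one substantive difference is in how the Maschke space $M$ is produced: you twist $\PP(V_4)$ directly via the projective representation $\bar\rho_4$, whereas the paper first passes to the linear representation $\rho_{10}=\Sym^2\rho_4$ of $\PSp_4(\FF_3)$, realizes $M$ inside $\PP^9=\PP(V_{10})$ as the Veronese image cut out by the $\rho_{20}$-summand of $\Sym^2\rho_{10}$, and reads off $\pi$ from the $\rho_5$-summand. Your route is shorter for the bare existence statement; the paper's detour buys an explicit model of $M$ over $k$, which it exploits later for computations. One slip to correct: the linear system defining $\PP(V_4)\dashrightarrow B^{(1)}$ consists of \emph{quartics}, not quadrics---indeed $\Sym^2\rho_4=\rho_{10}$ is irreducible, and $\rho_5$ first appears in $\Sym^4\rho_4=\rho_5\oplus\rho_{30}$ (which is also why $-1\in\Sp_4(\FF_3)$ acts trivially on the linear system and the map is genuinely $\PSp_4(\FF_3)$-equivariant). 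Your period argument via self-duality of $M$ is a pleasant alternative to the paper's, which simply reads off $\Ob(B)\in\HH^2(k,\mu_2)$ from the connecting map of $1\to\mu_2\to\Sp_4(\FF_3)\to\PSp_4(\FF_3)\to 1$.
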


It is well-known that $B^{(1)}$ is a birational model of the moduli space of principally polarized abelian surfaces with full $3$-level structure. Outside the Hessian locus $\He(B)$ on $B$, a point $\alpha$ corresponds to the Jacobian $A_\alpha=\Jac(C_\alpha)$ of a genus $2$ curve $C_\alpha$, together with an isomorphism $\Sigma^{(1)}\to A_\alpha[3]$.
Here, $\Sigma^{(1)}=(\ZZ/3\ZZ)^2\times (\mu_3)^2$ is equipped with a natural pairing $\Sigma^{(1)}\times \Sigma^{(1)} \to \mu_3$ and the isomorphism is compatible with the Weil-pairing on $A_\alpha[3]$.

The intersection $B\cap \He(B)$ consists of a union of $40$ planes over $\ksep$, called $j$-planes.

The rational map $\pi\colon M\to B$, which is regular outside of $\He(B)$, has a moduli interpretation as well. It corresponds to marking an odd theta characteristic: a rational Weierstrass point on $C_\alpha$. The $M$ here is referred to as the \emph{Maschke $\PP^3$}.

For our purposes, it is more natural to think of $B^{(1)}$ as a moduli space of Kummer surfaces $K_\alpha=A_\alpha/\langle -1\rangle$. These come with a marked singularity (the image of the identity element of $A_\alpha$) as well as a Kummer $3$-level structure $\Sigmabar^{(1)}=\Sigma^{(1)}/\langle -1 \rangle$.

In general a Kummer surface $K$ over $k$, with one of the $16$ singular points marked, is a quotient of an abelian surface $A$ over $\ksep$. It does not fully determine $A$ over $k$: if $A$ admits a model over $k$, then any quadratic twist of $A$ has a Kummer surface isomorphic to $K$ as well. In fact, there may be no such abelian surface over $k$ at all. This is measured by $\Ob(K)\in\Br(k)$ and is represented by a conic $Q_K$. Equivalently, this obstruction arises from the moduli determining the curve $C_\alpha$. These moduli determine a curve of genus $0$ with a degree-$6$ locus marked, but only if that genus $0$ curve is actually a $\PP^1$ can one realize a double cover ramified over the marked locus.

The Kummer $3$-level structure already detects $\Ob(K)$. We establish that any Burkhardt quartic $B$ parametrizes Kummer surfaces with prescribed Kummer $3$-level structure, so it follows that the obstruction map is constant and hence is a function of $B$ itself. We collect results about it, and implications for the rational points on $B$, in the theorem below, that we prove in Section~\ref{S:proof_burkhardt_ob}.

\begin{restatable}{theorem}{thmburkhardtob}
\label{T:burkhardt_ob}%
Let $B$ be a Burkhardt quartic over a field $k$ of characteristic not $2,3,5$.
\begin{enumerate}[label=(\alph*)]
	\item\label{ob:part1} Then $B$ is naturally birational to the moduli space of Kummer surfaces with a Kummer $3$-level structure $\Sigmabar$.
	\item\label{ob:part2} If $\alpha \in B(k) \setminus \He(B)(k)$, then $\Ob(B)=\Ob(K_\alpha)$.
	\item\label{ob:part3} If $\He(B)\cap B$ contains a $j$-plane defined over $k$ then $\Ob(B)=1$.
	\item\label{ob:part4} If the Kummer $3$-level structure $\Sigmabar$ is a quotient of a full $3$-level structure $\Sigma$ over $k$, then $B(k)$ is Zariski-dense in $B$, and one can find a hyperelliptic curve with a rational Weierstrass point
	\[C\colon y^2=x^5+a_1x^4+a_2x^3+a_3x^2+a_4x+a_5\]
	such that $\Jac(C)[3]\simeq \Sigma$.
	\item\label{ob:part5} If $\Ob(B)$ has index $4$ then $B(k)$ consists of singular points and the desingularization of $B$ has no $k$-rational points at all.
\end{enumerate}
\end{restatable}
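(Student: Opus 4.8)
The plan is to funnel the entire statement through the dictionary relating the index of $\Ob(B)=[M]\in\Br(k)$ to the $k$-rational linear subvarieties of the Maschke $\PP^3$. For a Brauer--Severi variety $M$ of dimension $3$, attached to a central simple algebra of degree $4$, a $k$-rational linear subspace of dimension $d$ exists exactly when the index divides $d+1$; hence index $4$ forces $M$ to contain no $k$-rational point, line, or plane, only $M$ itself being defined over $k$. Everything then reduces to showing that a $k$-point of $B$ in the smooth locus, or of the desingularization $\tilde{B}$, would manufacture a $k$-rational linear subvariety of $M$ of dimension at most $2$, contradicting the index-$4$ hypothesis.

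First I would dispose of the smooth locus, which gives the first assertion, that $B(k)$ consists of singular points. If $\alpha\in B(k)\setminus\He(B)(k)$, then part~\ref{ob:part2} gives $\Ob(B)=\Ob(K_\alpha)$, the class of the conic $Q_{K_\alpha}$, whose index divides $2$; this is incompatible with index $4$, so $B(k)\subseteq\He(B)(k)$. For a smooth $\alpha\in(B\cap\He(B))(k)$ I would use the incidence geometry of the Burkhardt configuration, by which distinct $j$-planes meet only in nodes, so that a non-nodal point of $B\cap\He(B)$ lies on a single $j$-plane. Since $\alpha$ is Galois-fixed and the set of $40$ $j$-planes is Galois-stable, the unique $j$-plane through $\alpha$ is then defined over $k$, and part~\ref{ob:part3} forces $\Ob(B)=1$, again contradicting index $4$. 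Hence every $k$-point of $B$ is a node. As $\tilde{B}\to B$ is an isomorphism over the smooth locus $B_{\mathrm{sm}}$, this already removes all $k$-points of $\tilde{B}$ lying over $B_{\mathrm{sm}}$.

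It remains to exclude $k$-points of $\tilde{B}$ over the $45$ nodes, which is the second, stronger assertion. One cannot argue that $B(k)$ is empty, since $B(k)$ may genuinely contain $k$-rational nodes; instead one must show that the exceptional fibre over each such node carries no $k$-point. Taking $\tilde{B}$ to be the blow-up of $B$ along its reduced singular locus, which is defined over $k$, the fibre over a node is a quadric surface, isomorphic to $\PP^1\times\PP^1$ over $\ksep$. I would show that such a quadric, through the degree-$6$ map $\pi$ and the configuration of the six odd theta characteristics, is tied to the Maschke $\PP^3$, so that a $k$-point on it determines a $k$-rational line or plane in $M=\PP^3$; the index-$4$ hypothesis then forbids it, forcing the quadric to be anisotropic with no $k$-point. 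Combined with the smooth-locus analysis, this yields $\tilde{B}(k)=\emptyset$.

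The main obstacle is exactly this correspondence at the nodes. One must pin down the local structure of the resolution at each of the $45$ nodes, control the Galois action on the nodes and on the two rulings of each exceptional quadric, and verify Galois-equivariantly that a $k$-point upstairs forces a $k$-rational linear subvariety of dimension $<3$ in $M$. The cleanest outcome would be to organize the exceptional loci into a family of subvarieties of $M$ governed by $\Ob(B)$, so that the absence of $k$-points follows uniformly from the Brauer--Severi dictionary above, rather than from a node-by-node computation of quadratic forms.
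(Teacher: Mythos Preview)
Your argument for part~\ref{ob:part5} has the right overall shape but contains a geometric error and leaves the key step unfinished.

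The claim that ``distinct $j$-planes meet only in nodes'' is false: as the paper records, $j$-planes corresponding to cyclic subgroups that pair trivially meet in a \emph{line}, and a generic point of such a line is smooth on $B$. So a smooth point of $B\cap\He(B)$ can lie on two $j$-planes, not just one. Your conclusion survives with a small patch: if $\alpha$ is smooth and lies on at most two $j$-planes, the Galois orbit of any $j$-plane through $\alpha$ has size at most $2$, so some $j$-plane is defined over an extension $L$ with $[L:k]\leq 2$, and part~\ref{ob:part3} forces $\Ob(B)|_L=1$, contradicting index $4$. The paper runs this step in the contrapositive: since every $j$-plane has field of definition of degree $\geq 4$, a $k$-rational point of $\He(B)$ lies on at least four $j$-planes (its Galois conjugates), and only the nodes have that property.

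For the exceptional fibres over the nodes you propose a genuinely different route from the paper, and you correctly flag it as the unresolved part. The paper does \emph{not} chase linear subvarieties of $M$ here. Instead it uses the modular interpretation of the blow-up at a node (Remark~\ref{R:singmod}): the exceptional quadric parametrizes elliptic Kummer surfaces $K$ with the given level structure, and by Remark~\ref{R:elliptic_kummer} the obstruction $\Ob(K)$ for such a $K$ is always represented by a \emph{conic} (the norm-form conic $u_0^2-ru_1^2=dw^2$), hence has index at most $2$. A $k$-point on the exceptional fibre would therefore exhibit $\Ob(B)=\Ob(\Sigmabar)$ as a conic class, contradicting index $4$. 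This argument is short and complete, whereas your proposed correspondence between points on the exceptional quadric and $k$-rational lines or planes in $M$ is not set up anywhere in the paper and would require real work: $\pi$ is not even regular along $\He(B)$, so you would first need to resolve it over the nodes and then identify the fibres inside $M$ Galois-equivariantly. That may be feasible, but it is not the one-line consequence of the Brauer--Severi dictionary that your outline suggests.
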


We also show that $\Ob(B)$ can indeed be of index $1$, $2$, or $4$. To this purpose we consider another classical model for the Burkhardt quartic threefold: the threefold in $\PP^5$ defined by the elementary symmetric functions in six variables $x_1,\ldots,x_6$ of degree $1$ and $4$:
\[B'\colon \sigma_1=\sigma_4=0.\]
We furthermore consider a form over $k=\RR(s,t)$ that is isomorphic to $B^{(1)}$ over $k(\sqrt{s},\sqrt{t})$, defined by
\[\begin{split}
	B''\colon&
	z_0^4 + 4z_0z_1^3 
	+ 3z_1^4 
	+ 3s^2z_2^4 
	+ 3t^2z_3^4
	+ 3s^2t^2z_4^4\\
	&+ 12sz_0z_1z_2^2 
	+ 12tz_0z_1z_3^2
	+ 12stz_0z_1z_4^2
	+ 24stz_0z_2z_3z_4
	+ 24stz_1z_2z_3z_4\\
	&- 6sz_1^2z_2^2 
	- 6tz_1^2z_3^2
	- 6stz_2^2z_3^2
	- 6stz_1^2z_4^2
	- 6s^2tz_2^2z_4^2
	- 6st^2z_3^2z_4^2=0.
\end{split}
\]
We use the notation $(a,b)$ for the class in $\Br(k)$ of a quaternion algebra over a field $k$ (see \eqref{E:quaternion_def}). We prove the following proposition in Section~\ref{S:proof_example}.
\begin{restatable}{prop}{propindex}\label{P:index}\;
\begin{enumerate}[label=(\alph*)]
\item\label{ex1} The standard model $B^{(1)}$ over $\QQ$ has $\Ob(B^{(1)})=1$, which is of index $1$.
\item\label{ex2} The symmetric model $B'\subset \PP^5$ has $\Ob(B')=(-3,-1)$, which over $\QQ$ is of index $2$.
\item\label{ex3}  The model $B''$ defined above has $\Ob(B'')=(-1,s)\otimes (-s,t)$, which over $\RR(s,t)$ has index $4$. We have $B''(k)=\{(1:-1:0:0:0)\}$, which is a singular point on $B''$. The blow-up of $B''$ at that point has no $k$-rational points at all.
\end{enumerate} 
\end{restatable}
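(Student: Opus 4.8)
The plan is to handle the three models separately: in each case I first compute the class $\Ob(B)\in\Br(k)$, either from the covering/moduli interpretation of Theorem~\ref{T:burkhardt_rep} or from an explicit twisting cocycle, and then determine its index by standard quadratic-form methods. For part~\ref{ex1} I would argue that the Maschke $\PP^3$, i.e.\ the source $M$ of the degree-$6$ map $\pi\colon M\to B^{(1)}$ of Theorem~\ref{T:burkhardt_rep}\ref{rep:part2}, is for the standard model the honest projective space $\PP^3$ over $\QQ$ carrying the Maschke group action; since a trivial Brauer--Severi variety represents $1\in\Br(\QQ)$, this gives $\Ob(B^{(1)})=1$, of index $1$. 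Alternatively, and more in the spirit of Theorem~\ref{T:burkhardt_ob}, I would exhibit a single $j$-plane of $B^{(1)}\cap\He(B^{(1)})$ defined over $\QQ$ and invoke part~\ref{ob:part3}; either route is a short explicit verification.

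For the symmetric model $B'$ I would fix an isomorphism $\phi\colon B'\to B^{(1)}$ over $\ksep$ (a coordinate change between the $\sigma_1=\sigma_4=0$ model and the standard pentahedral model), record the cocycle $\sigma\mapsto \phi\circ{}^\sigma\phi^{-1}$ with values in $\Aut(B^{(1)})$ (closely tied to $\Sp_4(\FF_3)$), and push it through the representation-theoretic obstruction map underlying Theorem~\ref{T:burkhardt_rep} to obtain the class of the Brauer--Severi threefold. I expect the six symmetric coordinates $x_1,\dots,x_6$ to encode the six Weierstrass points, so that the $\mu_3$ appearing in the level structure forces a $\QQ(\sqrt{-3})$ into the cocycle and the class comes out as $(-3,-1)$. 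To see that this has index $2$ I would check that the quaternion algebra $(-3,-1)$ is non-split over $\QQ$: it ramifies at the real place, since over $\RR$ it coincides with Hamilton's $(-1,-1)$, so the associated conic $Q_K$ has no rational point.

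For $B''$ the twist is governed by a cocycle factoring through $\Gal(k(\sqrt s,\sqrt t)/k)\cong(\ZZ/2\ZZ)^2$, and evaluating the obstruction map on this explicit $(\ZZ/2\ZZ)^2$-cocycle should yield the biquaternion class $(-1,s)\otimes(-s,t)$. To prove that it has index $4$ over $k=\RR(s,t)$ I would show, via Albert's theorem, that it is not equivalent to a single quaternion algebra by checking that its Albert form $\langle -1,\,s,\,s,\,s,\,-t,\,-st\rangle$ is anisotropic: Springer's theorem at the $t$-adic valuation reduces this to the anisotropy over $\RR(s)$ of the residue forms $\langle -1,s,s,s\rangle$ and $\langle -1,-s\rangle$, and a second application of Springer's theorem at the $s$-adic valuation reduces each of these to anisotropic forms over $\RR$ (namely $\langle -1\rangle$ together with $\langle 1,1,1\rangle$, resp.\ $\langle -1\rangle$), so the Albert form is anisotropic and the index is $4$. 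Granting index $4$, Theorem~\ref{T:burkhardt_ob}\ref{ob:part5} immediately gives that $B''(k)$ consists only of singular points and that the desingularization has no $k$-points. To pin $B''(k)$ down exactly, a direct substitution confirms that $(1:-1:0:0:0)$ lies on $B''$ and that all partials vanish there, so it is one of the $45$ singularities and is $k$-rational; to see it is the only rational point I would list the $45$ singular points over $\ksep$ in these coordinates and determine which are fixed by the $(\ZZ/2\ZZ)^2$-action, expecting $(1:-1:0:0:0)$ to be the unique Galois-fixed one.

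The genuinely hard step throughout is making the obstruction map of Theorem~\ref{T:burkhardt_rep} explicit enough to read off the quaternion class $(-3,-1)$ in part~\ref{ex2} and the biquaternion class $(-1,s)\otimes(-s,t)$ in part~\ref{ex3} directly from the respective twisting cocycles. Once these classes are identified, the index computations—a local ramification check for $(-3,-1)$ and the iterated-Springer anisotropy argument for $(-1,s)\otimes(-s,t)$—and the final enumeration of $k$-rational singular points of $B''$ are comparatively routine.
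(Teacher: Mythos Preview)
Your index computations and the endgame for \ref{ex3} are fine, but the plan for actually identifying the Brauer classes in \ref{ex2} and \ref{ex3} is where the proposal is genuinely incomplete, and the paper takes a quite different and much more direct route that you should be aware of.

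For \ref{ex2}, the paper does \emph{not} compute the twisting cocycle $\sigma\mapsto\phi\circ{}^\sigma\phi^{-1}$ and push it through the cohomological boundary map. Instead it uses Theorem~\ref{T:burkhardt_ob}\ref{ob:part2}: pick a single rational point $\alpha\in B'(\QQ)\setminus\He(B')$, form the polar conic $Q_\alpha$ (the intersection $P_\alpha^{(2)}\cap P_\alpha^{(3)}$ modulo the vertex), and read off its class. Concretely, with $\alpha=(40:-30:-8:-5:3:0)$ one finds $Q_\alpha\simeq\{3x^2+y^2+z^2=0\}$, i.e.\ $(-3,-1)$. This is a five-minute computation, whereas your cocycle approach requires making the map $\HH^1(k,\PSp_4(\FF_3))\to\HH^2(k,\mu_2)$ explicit, which you yourself flag as the hard step and do not carry out; the heuristic ``$\mu_3$ forces $\sqrt{-3}$'' does not by itself pin down the second entry $-1$.

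For \ref{ex3} the same shortcut is \emph{not} directly available, because by the very conclusion you are proving there is no $\alpha\in B''(k)\setminus\He(B'')$ at which to evaluate $Q_\alpha$. The paper therefore argues indirectly: it first shows $\Ob(B'')$ lies in $\Br(\RR[s^{\pm1},t^{\pm1}])[2]$, proves a small lemma that this group is generated by $(-1,-1),(-1,s),(-1,t),(s,t)$, and observes that the three restriction maps to $\Br(k(\sqrt{s}))$, $\Br(k(\sqrt{t}))$, $\Br(k(\sqrt{st}))$ are jointly injective on it. Each restriction is computed by specializing $s\to 1$ (resp.\ $t\to 1$, $st\to 1$); on the specialized model one \emph{does} have a convenient rational point outside the Hessian, and the polar-conic trick yields $(-1,t)$, $(-1,s)$, $(-1,s)$ respectively. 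These three restrictions then force $\Ob(B'')=(-1,s)\otimes(-s,t)$. Your proposal to ``evaluate the obstruction map on the explicit $(\ZZ/2\ZZ)^2$-cocycle'' would in principle work but is again the step you leave undone; the paper's specialization-plus-point method replaces that nonconstructive step with three concrete conic computations.

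Your Albert-form/Springer argument for index~$4$ is correct and is essentially one of the two verifications the paper gives. Your appeal to Theorem~\ref{T:burkhardt_ob}\ref{ob:part5} for the statement about the blow-up is also what the paper does (it additionally checks the tangent cone $su_0^2+tu_1^2+stu_2^2-3u_3^2=0$ directly, but that is redundant once \ref{ob:part5} is in hand). Enumerating the Galois-fixed singularities to isolate $(1:-1:0:0:0)$ is fine and matches the paper's claim.
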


We see that having $\Ob(B)$ of index $4$ puts severe restrictions on the rational points on $B$. As Proposition~\ref{P:index}\ref{ex3} shows, there are Kummer $3$-level structures that \emph{a priori} are admissible in the sense that they correspond to an element of $\HH^1(k,\PSp_4(\FF_3))$, but do not occur for a Kummer surface over $k$. This is in stark contrast to what happens with $3$-level structures for abelian surfaces, where Theorem~\ref{T:burkhardt_ob}\ref{ob:part4} guarantees that the corresponding moduli space is in fact unirational.

Over a number field, however, and in particular over $\QQ$, index and period of Brauer group elements agree, so $\Ob(B)$ is of index at most $2$ and we don't get a particular obstruction to $\QQ$-rational points on $B$.
 
For instance, it is not hard to find many rational points on $B'$, including ones that do not lie in $\He(B')$. We establish in Section~\ref{S:proof_density} the following.

\begin{restatable}{prop}{propdensity}\label{P:density}
	The Burkhardt quartic $B'$ over $\QQ$ is birational to the elliptic threefold in $\PP^2\times\AA^2$, defined by
	\[\begin{split}
C_{(u,v)}\colon& (u + v - 1)XY(X+Y) + (-uv + u + v)(X^2+Y^2)Z\\
&+ (-u^2 - 3uv + 3u - v^2 +
	3v - 1)XYZ  + (u^2v + uv^2 - uv)Z^3\\
&+ (u^2v - u^2 + uv^2 - 3uv + u - v^2 + v)(X+Y)Z^2=0,
\end{split}\]
where $C_{(u,v)}$ has rational flex points $(1:0:0),(0:1:0),(1:-1:0)$. The map to $B'$ is given by
\[(x_1:x_2:x_3:x_4:x_5:x_6)=(X:Y:-uZ:-vZ:Z:-X-Y+(u+v-1)Z).\]
Furthermore, $B'(\QQ)$ is Zariski-dense in $B'$.
\end{restatable}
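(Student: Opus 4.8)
The plan is to make the stated map explicit, verify that it lands in $B'$ and is birational by exhibiting an inverse, confirm the three flex assertions, and then reduce the density statement to the existence of a single non-torsion section of the resulting elliptic fibration.

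First I would substitute the proposed parametrization
\[(x_1:\cdots:x_6)=(X:Y:-uZ:-vZ:Z:-X-Y+(u+v-1)Z)\]
into the two defining equations $\sigma_1=\sigma_4=0$ of $B'$. One checks immediately that $\sigma_1$ pulls back to $0$, so the image lies in the hyperplane $\sigma_1=0\cong\PP^4$. Writing $\sigma_4$ on this hyperplane in Newton form, where $p_1=\sigma_1=0$ gives $\sigma_4=\tfrac18 p_2^2-\tfrac14 p_4$, a direct expansion shows that $\sigma_4$ pulls back to $L\cdot C_{(u,v)}$ for an explicit linear form $L$ in $X,Y,Z$; hence the map carries the cubic $\{C_{(u,v)}=0\}$ into $B'$. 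Birationality then follows from the explicit inverse $(u,v)=(-x_3/x_5,-x_4/x_5)$ and $(X:Y:Z)=(x_1:x_2:x_5)$, which is defined on the dense locus $x_5\neq0$ and composes with the forward map to the identity. Projection to the $(u,v)$-plane then exhibits $B'$ as an elliptic threefold with plane-cubic fibers.

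Next I would verify the flex claim. The three asserted points are precisely $C_{(u,v)}\cap\{Z=0\}$: setting $Z=0$ collapses $C_{(u,v)}$ to $(u+v-1)\,XY(X+Y)$, whose zeros on the line at infinity are $(1:0:0),(0:1:0),(1:-1:0)$. That each is an inflection point is a short local computation: at $(1:0:0)$ the tangent line is $(u+v-1)Y+(-uv+u+v)Z=0$, and substituting it back into $C_{(u,v)}$ shows contact of order three at that point (equivalently, the Hessian determinant vanishes there), with the other two points following from the $X\leftrightarrow Y$ symmetry of $C_{(u,v)}$. A rational flex supplies a rational origin for the group law on each smooth fiber, so $C_{(u,v)}\to\AA^2$ is an elliptic fibration with a rational section.

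Finally, for Zariski-density I would note that since $\AA^2(\QQ)$ is dense and the fibers are curves, $B'(\QQ)$ is dense as soon as the generic fiber $E=C_{(u,v)}$ over $K=\QQ(u,v)$ has a non-torsion point $P$: the multiples $nP$ are then distinct $K$-points, hence distinct rational sections, giving infinitely many distinct irreducible surfaces inside $B'$, and the union of infinitely many distinct irreducible surfaces in an irreducible threefold is Zariski-dense. The three flexes are only $3$-torsion and contribute nothing here, so the crux -- and the step I expect to be the main obstacle -- is producing such a non-torsion section. I would do this by writing down one further explicit rational point on the symmetric model $B'$ (such points are plentiful) whose coordinates are rational functions of $(u,v)$, transporting it through the inverse map to a section $P$ of the fibration, and then certifying that $P$ is non-torsion by specializing at a convenient $(u_0,v_0)\in\QQ^2$ and checking that $P(u_0,v_0)$ has infinite order on $C_{(u_0,v_0)}(\QQ)$ -- for instance by reducing modulo a couple of primes of good reduction to bound the possible torsion, or by computing a positive canonical height. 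Non-torsion of a specialization forces $P$ itself to be non-torsion, which finishes the argument.
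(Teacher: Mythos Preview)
Your treatment of the birational map and the flex assertions is fine and matches the paper: the plane $V_{u,v}$ through the line $L_{345}\colon x_3=x_4=x_5=\sigma_1=0$ meets $B'$ in that line together with the cubic $C_{(u,v)}$ (so your linear factor $L$ is $Z$), and the three singularities of $B'$ lying on $L_{345}$ give the three collinear flexes.

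The gap is in the density argument. Your reduction is logically sound---a non-torsion section of the generic fiber over $\QQ(u,v)$ would indeed give infinitely many rational sections whose $\QQ$-points are Zariski-dense---but you never produce such a section, and your proposed method does not yield one. A ``rational point on $B'$ whose coordinates are rational functions of $(u,v)$'' \emph{is} precisely a section of the fibration, so this is the object to be constructed, not a route to it; and an ordinary $\QQ$-point on $B'$ lands on a single fiber and does not spread out. The parenthetical ``such points are plentiful'' suggests you may be conflating $\QQ$-points (plentiful) with $\QQ(u,v)$-points over the generic base point (of which only the three torsion sections are known). It is not at all clear that the generic Mordell--Weil rank is positive: the flexes contribute only a $\ZZ/3\ZZ$, and the $S_6$-symmetry of $B'$ permutes the three torsion sections and the twenty choices of fibration rather than furnishing a fourth section of any one of them.

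The paper sidesteps this entirely. It exhibits a single non-torsion point $P_0=(20:2:-9:-60:15:32)$ on one specific fiber $C_{3/5,4}$, giving density of $\QQ$-points in the plane $V_{3/5,4}$. It then switches to a second fibration, through the line $L_{245}$, whose fibers sweep across that plane: infinitely many of these fibers thereby acquire a rational point off the torsion sections, hence (away from finitely many exceptions) positive rank, yielding density in a $3$-space. A third fibration through $L_{145}$ repeats the step to reach all of $B'$. The point is that one non-torsion point on one special fiber, combined with several transverse elliptic fibrations, suffices; positivity of the generic rank is never needed or claimed.
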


With some modest experimentation we have not been able to find a twist of $B$ over $\QQ$ that did not have any rational points. This gives some mild circumstantial evidence for a possibly negative answer to the following question.

\begin{question}
Does there exist a Burkhardt quartic $B$ for which $\Ob(B)$ has index $2$ and for which the rational points are \emph{not} Zariski-dense?
\end{question}

This article is partially based on the masters thesis \cite{Filatov2020} of the second author written under supervision by the first.

\subsection*{Acknowledgements} We thank Frank Calegari for useful discussions, John Voight for references to Albert's work on biquaternion algebras and helpful comments and discussions, and Asher Auel for interesting discussion concerning the proof of Lemma~\ref{L:brRst}. We also thank multiple anonymous referees for various comments that helped improve the exposition in the paper.

\section{Background}

\subsection{Brauer groups and Brauer-Severi varieties}

In what follows, we frequently refer to the Brauer group $\Br(k)$ of a field $k$. There are many descriptions. It can be described as the Galois cohomology group $\Br(k)=\HH^2(k,{\ksep}^\times)$. Elements of $\Br(k)$ also correspond to $k$-isomorphism classes of \emph{Brauer-Severi} varieties: varieties that, over $\ksep$, are isomorphic to $\PP^n$ for some $n\geq 0$. We refer to \cite{GilleSzamuely2006} for details; here we just review some standard terminology and results that we need in the rest of the text.

The \emph{period} of an element in $\Br(k)$ is its order under the group structure of $\Br(k)$. We will only be dealing with elements of order dividing $2$, i.e., elements that lie in $\Br(k)[2]=\HH^2(k,\mu_2)$, where $\mu_p$ stands for the $p$-th roots of unity.

The \emph{index} of an element $\xi\in \Br(k)$ is the smallest degree of an extension $L$ such that $\xi$ lies in the kernel of the restriction map $\Br(k)\to\Br(L)$. Since a Brauer-Severi variety $V$ is isomorphic to $\PP^n$ if and only if it has a rational point, the index is also the smallest degree of an extension $L$ for which $V$ has an $L$-rational point.

The period always divides the index. For local fields and global fields, the period and index are equal. However, for fields of higher cohomological dimension, such as $\RR(s,t)$, the period can be strictly smaller than the index.

Elements of $\Br(k)$ also correspond to Brauer-equivalence classes of central simple algebras. The group law on $\Br(k)$ is induced by the tensor product on algebras. A famous theorem by Merkurjev-Suslin states that $\Br(k)[2]$ is generated by quaternion algebras. For $a,b\in k^\times$ we write $(a,b)$ for the quaternion algebra
\begin{equation}\label{E:quaternion_def}
(a,b)=k\oplus ik\oplus jk\oplus ijk, \text{ with } i^2=a,\;j^2=b,\;ij=-ji.
\end{equation}
We also use $(a,b)$ to denote its Brauer class in $\Br(k)$. The Brauer-Severi variety belonging to $(a,b)$ is the conic $Q\colon z^2-ax^2-by^2=0$.
Elements of $\Br(k)[2]$ of index at most $2$ are exactly the ones that can be represented by a single quaternion algebra, or equivalently an isomorphism class of plane conics.

\subsection{Obstructions for genus $2$ curves}

As is well-known, genus $2$ curves and, equivalently, abelian surfaces, can have different fields of moduli and fields of definition: for a non-algebraically closed field $k$, one may have an isomorphism class of genus $2$ curves over $\ksep$ that is stable under $\Gal(\ksep/k)$, but does not contain any curves defined over $k$.

This phenomenon can be made very explicit, see \cite{Mestre91}. A genus $2$ curve is geometrically determined by a degree-$6$ separated locus on a genus $0$ curve. This data can be specified over $k$ by a plane conic $Q$ and cubic curve $C$ (in fact, a $3$-dimensional linear system of cubics) over $k$. 
The data only correspond to a genus $2$ curve defined over $k$ if $Q$ is isomorphic to $\PP^1$ over $k$, in which case an appropriate genus $2$ curve is obtained as a double cover of $\PP^1$, ramified over the degree-$6$ locus. 

The isomorphism class of a conic $Q$ over $k$, a Brauer-Severi variety of dimension $1$, is an element of the Brauer group of $k$ of index and period dividing $2$.
Writing $\sM_2$ for the (coarse) moduli space of curves of genus $2$, the construction above gives rise to a map
\[\Ob\colon \sM_2(k)\to \Br(k)[2],\]
where a point $\alpha\in \sM_2(k)$ can be represented by a genus $2$ curve defined over $k$ if and only if $\Ob(\alpha)$ vanishes.

\subsection{Principally polarized abelian surfaces and their Kummer surfaces}

Let $C$ be a curve of genus $2$ over a field $k$. Then $\Jac(C)$ is a principally polarized abelian surface over $k$, and by the Torelli theorem, $C$ can be recovered from $\Jac(C)$:  there is an injective morphism between moduli spaces $\sM_2\to\sA_2$.

Associated to a principally polarized abelian surface $A$ is its \emph{Kummer surface} $\Kum(A)=A/\langle -1\rangle$, obtained by identifying points with their inverses. The fixed locus of the inversion map, the $2$-torsion $A[2]$, is $0$-dimensional of degree $16$ and maps onto the singular locus of $\Kum(A)$. In addition, one of the singular points on $\Kum(A)$ is distinguished: it is the image of the identity element of $A$.

If $A=\Jac(C)$, then $\Kum(A)$ admits a quartic model in $\PP^3$ (a Kummer quartic), and conversely, any quartic surface $K$ in $\PP^3$ with 16 nodal singularities and a distinguished node can be recognized as $\Kum(A)$ for some $A$ over $\ksep$. An explicit, classic construction to do so goes as follows.

The projective dual $K^*$ of $K$ is again a Kummer quartic surface. The 16 singularities of $K^*$ correspond to 16 \emph{tropes} on $K$: planes that intersect $K$ in a double-counting conic. Each trope passes through 6 nodes and each node lies on $6$ tropes, forming the classical $(16_6)$ \emph{Kummer configuration}.

Note that $K$ has a distinguished node and, by duality, $K^*$ has a distinguished trope. It follows that $K$ has a node defined over the base field $k$ and $K^*$ has a trope defined over $k$. It is a classical result that if $k$ is algebraically closed then $K$ is projectively isomorphic to $K^*$. For $k$ not algebraically closed, $K$ and $K^*$ may be non-isomorphic, witnessed by the fact that $K$ having a rational node does not imply that it also has a rational trope.

The distinguished trope on $K^*$ cuts out a plane conic with six marked points: the nodes the trope passes through. Equivalently, we consider the tangent cone to $K$ at the distinguished node. The six tropes passing through it intersect the tangent cone in lines through the node. Projection from the node yields a plane conic $Q_K$ with six marked points. 

If $Q_K\simeq \PP^1$ then this data determines a genus $2$ curve $C$, up to quadratic twist, such that $K=\Kum(\Jac(C))$. For a Kummer surface $K$ over $k$, we write $\Ob(K)$ for the isomorphism class of $Q_K$ in $\Br(k)$. Indeed, every point $\alpha\in \sM_2(k)$ corresponds to a Kummer quartic surface $K_\alpha$ such that $\Ob(\alpha)=\Ob(K_\alpha)$ and such that if $\Ob(\alpha)=0$, then $\Kum(\Jac(C_\alpha))=K_\alpha$. This can be checked by considering a quadratic extension $L$ of $k$ such that $\Res_L(\Ob(\alpha))=0$, construct $C_\alpha$ over $L$, and confirming that $K_\alpha$ can be descended to $k$.

In the following proposition, \emph{sufficiently general} means representing a point on an open part of the relevant moduli space. In fact the proof given works for Kummer surfaces of Picard number $17$.

\begin{prop}\label{P:kummer_generic_aut} Let $K$ be a sufficiently general quartic Kummer surface with distinguished node. Then $K$ has trivial automorphism group preserving the node.
\end{prop}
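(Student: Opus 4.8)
The plan is to pass to the minimal resolution and reduce the statement to a combination of lattice theory on a K3 surface and the scarcity of automorphisms of a generic abelian surface. Write $\tilde K\to K$ for the minimal desingularization; it is a K3 surface carrying sixteen mutually disjoint $(-2)$-curves $E_0,\dots,E_{15}$ over the nodes, with $E_0$ lying over the distinguished node. Any automorphism of $K$ lifts uniquely to $\tilde K$ and must permute the contracted curves $E_i$, so I identify $\Aut(K)$ with the group of automorphisms of $\tilde K$ preserving the set $\{E_0,\dots,E_{15}\}$, and preserving the distinguished node means fixing $E_0$. The genericity hypothesis I use is that $K=\Kum(A)$ with $\operatorname{End}(A)=\ZZ$; equivalently $\tilde K$ has Picard number $17$. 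This has two consequences I will exploit: $\Aut(A,0)=\{\pm1\}$, and the orthogonal complement of $\langle E_0,\dots,E_{15}\rangle$ inside $\operatorname{NS}(\tilde K)\otimes\QQ$ is only one-dimensional.

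First I would show that every node-preserving automorphism is in fact linear, i.e. induced by an element of $\PGL_4$ preserving the quartic. Let $h\in\operatorname{NS}(\tilde K)$ be the pullback of $\mathcal O_{\PP^3}(1)$, so that $h^2=4$ and $h\cdot E_i=0$ for all $i$. The sixteen classes $E_i$ span a negative-definite rank-$16$ sublattice, and since $\rho(\tilde K)=17$ their rational orthogonal complement in $\operatorname{NS}(\tilde K)\otimes\QQ$ is exactly the line $\QQ h$. An automorphism $\phi$ permuting the $E_i$ preserves this line, so $\phi^*h=ch$; as $\phi^*$ is an isometry $c^2=1$, and as $\phi^*h$ is again ample we get $c=1$. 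Thus $\phi^*h=h$, so $\phi$ preserves the linear system $|h|$ defining the quartic embedding and is therefore linear. This is the step where Picard number $17$ is essential: for more special $A$ the orthogonal complement of the $E_i$ acquires extra classes, $h$ can be moved, and genuinely non-linear node-preserving automorphisms appear. I expect this to be the main obstacle, both because it is where the hypothesis is truly needed and because it is exactly what rules out the abundant extra automorphisms that a generic Kummer K3 otherwise carries.

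With linearity in hand the conclusion is classical. The translations by the sixteen points of $A[2]$ descend to linear automorphisms of $K$ that act simply transitively on the sixteen nodes, so any node-preserving linear automorphism $\phi$ already fixes the distinguished node; lifting $\phi$ through the double cover $\tilde A\to\tilde K$ (the blow-up of $A$ at $A[2]$) then produces an automorphism of $A$ fixing $0$, that is, an element of $\Aut(A,0)=\{\pm1\}$, and both $\pm1$ act trivially on $K=A/\langle-1\rangle$. Hence $\phi$ is the identity and the stabilizer of the distinguished node is trivial. The same conclusion can also be read off concretely from the construction preceding the proposition: a node-fixing linear automorphism acts on the tangent cone at the distinguished node, hence on the conic $Q_K$ together with its six marked points, which are the images of the Weierstrass points of the associated genus $2$ curve $C_\alpha$. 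For generic $C_\alpha$ the reduced automorphism group $\Aut(C_\alpha)/\langle\iota\rangle$ is trivial, so $\phi$ fixes $Q_K$ pointwise, hence acts trivially on the projectivized tangent space $\PP(T_{\bar 0}\PP^3)=\PP^2$, and a finite-order linear automorphism of $\PP^3$ that fixes a point and acts trivially on the tangent space there is the identity.
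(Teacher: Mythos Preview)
Your argument is correct and complete via the first route (lattice theory on $\tilde K$, then lifting to $A$). It differs from the paper's proof in how linearity is obtained. The paper passes to the projective dual $K^*$: fixing the distinguished node on $K$ corresponds, via the common resolution $\tilde K$, to preserving the distinguished trope on $K^*$; since twice a trope is a hyperplane section of $K^*$, this forces the automorphism to fix the hyperplane class of $K^*$ and hence to be linear. Your route instead stays on $\tilde K$ and uses $\rho(\tilde K)=17$ to pin down $h$ as the unique (up to sign) class orthogonal to all $E_i$. The paper's approach is more in the spirit of the classical $(16_6)$ geometry and uses the node-fixing hypothesis already for linearity; yours is a clean intrinsic K3 lattice argument, and the lifting step through the double cover $\tilde A\to\tilde K$ (using that $\Pic(\tilde K)$ is torsion-free, so $\phi^*$ preserves the half of $\sum E_i$) is a nice way to finish. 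Both land on the same endgame: linear automorphisms of a generic Kummer quartic are exactly the $2$-torsion translations, which act freely on the nodes.

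One small correction to your alternative tangent-cone argument: the final sentence ``a finite-order linear automorphism of $\PP^3$ that fixes a point and acts trivially on the tangent space there is the identity'' is false as stated---$\mathrm{diag}(1,-1,-1,-1)$ is a counterexample. What is true, and what you need, is that such a $\phi$ is (after diagonalising) of the form $\mathrm{diag}(1,c,c,c)$, and then preservation of the quartic $x_0^2 f_2 + x_0 f_3 + f_4$ forces $c^2=c^3=c^4$, hence $c=1$, provided $f_3\neq 0$ (a genericity condition on $K$). With that amendment the second argument also goes through.
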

\begin{proof}
An automorphism on $K$ that fixes the distinguished node corresponds to a birational automorphism on $K^*$ that preserves a trope. One can see this by considering the desingularized Kummer surface $\tilde{K}$ that covers both $K$ and $K^*$.

Since twice a trope is a hyperplane section of $K^*$, we see that the automorphism fixes a hyperplane divisor. Since the quartic model of $K^*$ in $\PP^3$ is given by a complete linear system, we see that such an automorphism acts linearly on $K^*$, and hence also on $K$. It acts on the $16$ nodes. However, for a sufficiently general Kummer surface, the only non-identity linear transformations on the nodes are the translations (corresponding to translation by $2$-torsion), which have no fixed points.
\end{proof}

\begin{remark}\label{R:elliptic_kummer}
As is shown in \cite{CardonaQuer2005}, for a point $\alpha\in \sA_2(k)$ representing an abelian variety $A_\alpha$ over $\ksep$ with an automorphism group larger than just $\mu_2$, the variety $A_\alpha$ can be descended to a model over $k$. In that sense, $\Ob(\alpha)=1$. However, in the case where $A_\alpha$ is a product of elliptic curves, there are extra automorphisms from negation on one of the factors. As a result, $K_\alpha$ inherits a non-trivial automorphism and the isomorphism class of $K_\alpha$ over $k$ is not uniquely determined by $\alpha$. Indeed, there can be an obstruction $\Ob(K)$ in those cases that does not factor through $\Ob(\alpha)$.

We can see this in the following way. Suppose that $A$ is the Weil restriction of an elliptic curve $E$ over a quadratic extension $L=k[\sqrt{r}]$, possibly split. The quotient $E\to \PP^1$ by $-1$ induces a degree-$4$ map from $A$ to the Weil restriction $V$ of $\PP^1$. If we write
\[E^{(\delta)}\colon \delta y^2 = f(x) = x^3+a_2x^2+a_4x+a_6,\]
where $\delta\in L^\times$ prescribes a quadratic twist of $E$ over $L$, then we can model $V$ as a quadric in $\PP^3$ with affine model $x_1^2-rx_2^2-x_3=0$, related to $E^{(\delta)}$ by $x=x_1+x_2\sqrt{r}$.
Let $d=N_{L/k}(\delta)$. We can get a degree-$16$ model $K^{(d)}$ for $\Kum(A^{(\delta)})$ in weighted $9$-dimensional projective space with coordinates
\[(1:x_1:x_2:x_1^2:x_1x_2:x_2^2:x_1x_3:x_2x_3:x_3^2:w),\]
with weights $1,\ldots,1,2$. The first nine coordinates give part of the degree-$2$ Segre embedding of $V$ and the final coordinate expresses $K^{(d)}$ as a double cover of $V$ via the relation
\[dw^2=N_{L/k}(f(x_1+x_2\sqrt{r})).\]
The right hand side is indeed quartic in $x_1,x_2,x_3$ thanks to the defining relation for $V$. 
We see that the isomorphism class of $K^{(d)}$ only depends on the class of $d$ in $k^\times/k^{\times 2}$. On the other hand, we see that $K^{(d)}$ only admits a cover by $A^{(\delta)}$ if there is a non-zero solution to the norm equation $u_0^2-r u_1^2=d w^2$, i.e., if a conic is isomorphic to $\PP^1$.
\end{remark}

\subsection{Three-Level structure}\label{S:3levelstructure}

The $3$-torsion on a principally polarized abelian surface $A$ is a $0$-dimensional group scheme $A[3]$ of degree $3^4$, together with a perfect alternating pairing $A[3]\times A[3]\to \mu_3$.

Let $\Sigma$ be such a group scheme, equipped with pairing. A $3$-level structure on a principally polarized abelian surface $A$ is an isomorphism $\Sigma \to A[3]$ compatible with the pairings on either side. One such group scheme is $\Sigma^{(1)}=(\ZZ/3\ZZ)^2\times (\mu_3)^2$, with the pairing induced by the fact that $(\mu_3)^2$ is the Cartier dual to $(\ZZ/3\ZZ)^2$.

The automorphism group of $\Sigma^{(1)}$ is isomorphic to $\Sp_4(\FF_3)$. The twisting principle (see \cite{milne:etale_cohomology}*{III.4}) implies that the isomorphism class of $\Sigma$ over $k$, being a twist of $\Sigma^{(1)}$, is classified by the Galois cohomology set $\HH^1(k,\Sp_4(\FF_3))$.

We write $\overline{\Sigma}$ for $(\Sigma-\{0\})/\langle \pm 1\rangle$. It is a degree-$40$ scheme, together with a pairing $\overline{\Sigma}\times \overline{\Sigma}\to \{0,1\}$, determined by whether the corresponding representatives in $\Sigma$ pair trivially. The automorphism group $\Sp_4(\FF_3)$ acts on $\Sigmabar$, with the center (generated by $-1$) acting trivially. Hence the action factors through the simple group $\PSp_4(\FF_3)$.

\begin{remark}
The pairing information on $\Sigmabar$ can be almost captured by an associated incidence structure. We can view $\Sigmabar$ as the set of $1$-dimensional subspaces of $V=(\FF_3)^4$, where $V$ is equipped with a perfect alternating pairing. There are exactly $40$ maximal isotropic subspaces of $V$, and each of those contains exactly four $1$-dimensional subspaces. Hence $\Sigmabar$ comes with $40$ subsets of cardinality $4$. The group of permutations preserving this incidence structure is $\PGSp_4(\FF_3)$, which contains $\PSp_4(\FF_3)$ as an index-$2$ subgroup.
\end{remark}

Let $A$ be an abelian surface with a $3$-level structure $\Sigma\to A$. Since multiplication-by-3 commutes with negation, it induces a well-defined map $K\to K$, which we call \emph{pseudo-multiplication by $3$}. The $3$-level structure on $A$ induces an isomorphism between $\Sigmabar\cup{0}$ and the fibre of the distinguished point. We call this a Kummer $3$-level structure on $K$.
A Kummer surface has pseudo-multiplication maps, regardless of whether $\Ob(K)$ is trivial. Hence we have a Kummer $3$-level structure on it as well.

The possible Kummer $3$-level structures are classified by $\HH^1(k,\PSp_4(\FF_3))$. Taking cohomology of the short exact sequence
\[1\to\mu_2\to\Sp_4(\FF_3)\to\PSp(\FF_3)\to 1\]
gives a map
\[\HH^1(k,\Sp_4(\FF_3))\to \HH^1(k,\PSp_4(\FF_3))\stackrel{\Ob}{\longrightarrow} \HH^2(k,\mu_2)\]
For a Kummer $3$-level structure $\Sigmabar$ we write $\Ob(\Sigmabar)$ for its class in $\HH^2(k,\mu_2)$.

By Proposition~\ref{P:kummer_generic_aut}, we see that for a point $\alpha\in\sA_2(k)$ for which $K_\alpha$ is sufficiently general, we have that $K_\alpha$ and $\Sigmabar$ are determined by $\alpha$ and hence that $\Ob(K_\alpha)=\Ob(\Sigmabar)$. 

As explained in Remark~\ref{R:elliptic_kummer}, for $\alpha$ that correspond to products of elliptic curves, the isomorphism class of the Kummer surface is only determined up to twist, but the Kummer $3$-level structure determines the twist $K_{\alpha,\Sigmabar}$. In this case, it can be checked that once again, $\Ob(K_{\alpha,\Sigmabar})=\Ob(\Sigmabar)$.

\subsection{Moduli spaces with $3$-level structure}

We write $\sA_2(\Sigma)$ for the moduli space of principally polarized abelian surfaces $A$ with $3$-level structure $\Sigma\to A$. 
Since the $(-1)$-automorphism on $\Sigma$ is the restriction of $-1$ on $A$, the isomorphism class of $\sA_2(\Sigma)$ only depends on $\Sigmabar$, and therefore we write $\sA_2(\Sigmabar)$. 

Indeed, from
\[\HH^1(k,\mu_2)\to \HH^1(k,\Sp_4(\FF_3))\to \HH^1(k,\PSp_4(\FF_3))\]
we see that different $\Sigma,\Sigma'$ map to isomorphic $\Sigmabar$ exactly when they are quadratic twists, and taking quadratic twists of an abelian variety $A$ will correspondingly twist its $3$-torsion.

In particular, we see that the level-structure-forgetting morphism $\sA_2(\Sigmabar)\to \sA_2$ is Galois with automorphism group $\PSp_4(\FF_3)$. Hence, for \emph{any} class $\Sigmabar$ in $\HH^1(k,\PSp_4(\FF_3))$ we have a corresponding twist $\sA_2(\Sigmabar)$, which we can then consider as a moduli space of Kummer surfaces with $3$-level structure.
The space comes with an obstruction map
\[\Ob\colon B(k)\to \Br(k);\; \alpha\mapsto\Ob(K_\alpha),\]
which is constant $\Ob(\Sigmabar)$.

\section{Representation-theoretic description of the Burkhardt quartic threefold}

The Burkhardt quartic form can be characterized, and in fact was discovered as, the unique quartic invariant of a certain five-dimensional representation of $\Sp_4(\FF_3)$. The degree-$6$ unirational parametrization $\PP^3\dashrightarrow B^{(1)}$ was similarly studied in classical representation-theoretic terms, see \cites{Klein1888, Witting1887, Maschke1889, Burkhardt1891, Coble1917}. 

We review some of these results below, emphasizing the fact that these can all be characterized in terms of low-dimensional irreducible representations of $\PSp_4(\FF_3)$. This means they can all be recovered from different forms over non-algebraically closed base fields as well.

\subsection{The Burkhardt quartic through representation theory}\label{S:representation_theory}

Let $k$ be a field of characteristic different from $2,3,5$. For now, we fix the $3$-level structure $\Sigma=\Sigma^{(1)}=(\ZZ/3\ZZ)^2\times(\mu_3)^2$ and the finite algebraic group $\Gamma$, defined over $k$, that describes its automorphism group. Then $\Gamma(\ksep)$ is isomorphic to $\Sp_4(\FF_3)$, but it comes with a specific Galois action. We refer to $\Gamma$ as a \emph{form} of $\Sp_4(\FF_3)$ over $k$.

The group affords two faithful irreducible $4$-dimensional representations: $\rho_4$ and its dual $\rho_4^\vee$ the complex conjugate. Explicit generators for the image of $\rho_4$ are
\[\begin{aligned}
A_1&=
\begin{pmatrix}
	-1 & 0 & 0 & 0 \\
	0 & -1 & 0 & 0 \\
	0 & 0 & -1 & 0 \\
	0 & 0 & 0 & -1
\end{pmatrix},\\
A_2&=\frac{1}{3}\begin{pmatrix}
	3\zeta & 0 & 0 & 0 \\
	0 & \zeta + 2 & \zeta + 2 & -\zeta + 1 \\
	0 & \zeta + 2 & \zeta - 1 & -\zeta - 2 \\
	0 & -\zeta + 1 & -\zeta - 2 & \zeta + 2
\end{pmatrix},\\
A_3&=\frac{1}{3}\begin{pmatrix}
	2\zeta + 1 & -2\zeta - 1 & 0 & \zeta + 2 \\
	\zeta - 1 & 2\zeta + 1 & 0 & -\zeta + 1 \\
	0 & 0 & 3\zeta + 3 & 0 \\
	\zeta + 2 & 2\zeta + 1 & 0 & 2\zeta + 1
\end{pmatrix}.
\end{aligned}\]
Note that the image of $\rho_4$ is stable under complex conjugation, so the image is a form of $\Sp_4(\FF_3)$ over $k$, even though the elements are not all individually defined over $k$.

\begin{table}
\[\begin{array}{c|rrr}
	\chi_i&\chi_i(A_1)&\chi_i(A_2)&\chi_i(A_3)\\
	\hline
	\rho_4&	-4 & 2\zeta + 1 & 3\zeta + 2 \\
	\rho_4^\vee&	-4 & -2\zeta - 1 & -3\zeta - 1 \\
	\rho_5&	5 & 0 & -3\zeta - 1 \\
	\rho_5^\vee&	5 & 0 & 3\zeta + 2 \\
	\rho_{10}&	10 & -1 & -3\zeta - 5 \\
	\rho_{10}^\vee&	10 & -1 & 3\zeta - 2 \\
	\chi_7&	-20 & 0 & 7 \\
	\rho_{20}&	20 & 1 & 2 \\
	\chi_9&	-20 & 0 & 3\zeta - 5 \\
	\chi_{10}&	-20 & 0 & -3\zeta - 8 \\
	\chi_{11}&	-20 & 2\zeta + 1 & 6\zeta + 1 \\
	\chi_{12}&	-20 & -2\zeta - 1 & -6\zeta - 5 \\
	\chi_{13}&	30 & -1 & 3 \\
	\rho_{30}&	30 & 0 & -9\zeta - 6 \\
	\rho_{30}^\vee&	30 & 0 & 9\zeta + 3
\end{array}\]
\caption{Irreducible characters of $\Sp_4(\FF_3)$ of degrees $4,5,10,20,30$}
\label{T:char}
\end{table}

The representation $\rho_4$ induces a projective representation of $\Gammabar=\Gamma/\langle -1\rangle$, whose image is a form of $\PSp_4(\FF_3)$ over $k$. The image of the projective representation can be described as the automorphism group of a \emph{Witting configuration} in $M=\PP^3$, consisting of $40$ points and $40$ planes, such that each point lies in $12$ planes and each plane passes through $12$ points.

Table~\ref{T:char} lists some irreducible characters of $\Sp_4(\FF_3)$, together with the character values at the generators. The values at these generators happen to separate all characters of $\Sp_4(\FF_3)$. Of particular note is that the characters of degree up to $10$ are almost completely determined by their degree: there are at most two of each, in which case one is the complex conjugate of the other.

\setcounter{footnote}{1}
A simple character computation, or an explicit computation with the given generators $A_1,A_2,A_3$, shows that $\Sym^2\rho_4=\rho_{10}$ 
and that $\Sym^4\rho_4=\rho_5\oplus\rho_{30}$.
The representation $\rho_5$ gives rise to a rational map $\pi\colon M\dashrightarrow \PP^4$, defined by a linear system\footnote{Recall that the dimension of a linear system is a \emph{projective} dimension and hence one less than the dimension of the vector space corresponding to it.}
of degree $4$ and dimension $4$ that vanishes on the Witting configuration in $M$. For $\Gamma$ as defined above, we can write $(t_1:t_2:t_3:t_4)$ for the coordinates on $M$. The linear system corresponding to $\rho_5$ as a component of $\Sym^4\rho_4$ is generated by
\[\begin{aligned}
	Y_0&=	3t_1t_2t_3t_4,\\
	Y_1&=	t_1(t_2^3 + t_3^3 - t_4^3),\\
	Y_2&=	-t_2(t_1^3 + t_3^3 + t_4^3),\\
	Y_3&=	t_3(-t_1^3 + t_2^3 + t_4^3),\\
	Y_4&=	t_4(t_1^3 + t_2^3 - t_3^3)
\end{aligned}
\]
With this description of $\pi$, the image in $\PP^4$ is dense in
\[B\colon y_0 (y_0^3 + y_1^3 + y_2^3 + y_3^3 + y_4^3 ) + 3y_1 y_2 y_3 y_4=0.\]
Hence, we recover the Burkhardt quartic threefold.

Of special note is that the centre of $\Gamma$ lies in the kernel of $\rho_{5}$ and $\rho_{10}$, so these representations are inflations of representations of $\Gammabar$. This allows us to obtain a model of $\pi\colon M\dashrightarrow B$ entirely in terms of the representation theory of $\Gammabar$ in the following way.

If we consider the degree-$2$ Veronese embedding $v_2\colon \PP^3\to\PP^9$ then the action of $\Gamma$ on $\PP^9$ is through $\Sym^2\rho_4=\rho_{10}$. We have $\Sym^2\rho_{10}=\rho_5\oplus \rho_{30} \oplus \rho_{20}$. Indeed, the image of $v_2$ is defined by a $20$-dimensional space of quadrics on $\PP^9$, corresponding to $\rho_{20}$ and the rational map to $B$ can now be obtained as a linear system of degree $2$ and dimension $4$ on $\PP^9$, from $\rho_5$ as a constituent of $\Sym^2\rho_{10}$.

Conversely, we can recover $\rho_{10}$ from $\rho_5$ via $\bigwedge^{\!2} \rho_5 =\rho_{10}^\vee$. That means that for any twist of $B$ in $\PP^4$, together with its automorphism group $\Gammabar$, given by its $5$-dimensional representation $\rho_5$ on the coordinates of $B$, we can recover $\rho_{10}$, a model of $M\subset \PP^{9}$ through $\rho_{20}$, and the map $M\dashrightarrow B$.

\subsection{The moduli interpretation of $B$ and $M$}\label{S:moduli_construction}

We recall that the \emph{polars} of a form $F\in k[x_1,\ldots,x_n]$ at a point $(\alpha_1,\ldots,\alpha_n)$ are defined by
\[P^{(1)}_{\alpha}(F)=\sum_{i=1}^n \alpha_i \frac{\partial F}{\partial x_i}\text{\; and\; }P^{(r+1)}_{\alpha}(F)=P^{(r)}_{\alpha}(P^{(1)}_{\alpha}(F))\text{ for } r=1,2,\ldots.\]
We write $P^{(i)}_\alpha$ for the polars of the form defining $B$. These are forms of degrees $3,2,1$ for $i=1,2,3$ respectively.

Intersection of $B$ with its hessian $\He(B)$ yields a locus that over $k^\sep$ consists of $40$ planes, called $j$-planes. The action of $\Gammabar$ on these is conjugate to the action of $\PSp_4(\FF_3)$ on the cyclic subgroups of $\Sigma$ of order $3$. These subgroups are in bijection with $\Sigmabar$. The pairing information is also reflected in the $j$-plane configuration: planes that pair trivially meet in a line and others meet in a point.

There is a synthetic description of the modular interpretation of $B$, see \cites{Coble1917,Hunt96}. Let $\alpha$ be a point in $B\setminus \He(B)$.  Then $P^{(3)}_\alpha\cap P^{(2)}_\alpha$ is a cone over a plane conic $Q_\alpha$, and the cubic $P^{(1)}_\alpha$ cuts out a degree six locus on $Q_\alpha$. In fact, the enveloping cone at $\alpha$ of $P^{(1)}_\alpha$ yields a cone over a dual Kummer surface $K_\alpha^*$, with $P^{(3)}_\alpha$ projecting to the distinguished trope. The $j$-planes project to tangent planes of $K_\alpha^*$ and hence yield points on its dual $K_\alpha$, marking a Kummer 3-level structure $\Sigmabar$ on $K_\alpha$. It follows that $\Ob(\alpha)=\Ob(\Sigmabar)$.

The rational map $\pi\colon M\dashrightarrow B$ has generic degree $6$ and also has a modular interpretation: outside $\He(B)$ it corresponds to the choice of an odd theta-characteristic, or, equivalently, a Weierstrass point on the genus $2$ curve of which $A_\alpha$ is the Jacobian. In recognition of the work Maschke did on these spaces \cite{Maschke1889}, the space $M$ is sometimes referred to as the \emph{Maschke $\PP^3$}.

As discussed before, for each $\xi \in \HH^1(k,\Gammabar)$ we get a different Kummer $3$-level structure $\Sigmabar^{(\xi)}$, and a corresponding form $\Gammabar^{(\xi)}$. By Hilbert 90, any representation $\rho\colon \Gammabar \to \GL_n$ gives rise to a corresponding representation $\rho^{(\xi)}\colon \Gammabar^{(\xi)} \to \GL_n$.

That means that $\Gammabar^{(\xi)}$ affords representations corresponding to $\rho_5,\rho_{10},\rho_{20}$. In particular, we get a twist $B^{(\xi)}\subset\PP^4$, together with a $3$-dimensional Brauer-Severi variety $M^{(\xi)}\subset\PP^9$ and a 
degree-$6$ rational map $\pi^{(\xi)}\colon M^{(\xi)}\to B^{(\xi)}$.
Note that $M^{(\xi)}$ is isomorphic to $\PP^3$ precisely when the action of $\Gammabar^{(\xi)}$ can be lifted to a $4$-dimensional linear representation, i.e., when $\xi$ can be lifted to $\HH^1(k,\Gamma)$. It follows that the isomorphism class of $M^{(\xi)}\subset\PP^9$ as a Brauer-Severi variety is the image $\Ob(\Sigmabar^{(\xi)})=\Ob(\xi)\in \HH^2(k,\mu_2)$.

\subsection{Proof of Theorem~\ref{T:burkhardt_rep}}
\label{S:proof_burkhardt_rep}
We restate Theorem~\ref{T:burkhardt_rep} and give its proof.

\thmburkhardtrep*

\begin{proof}
\begin{description}
	\item[Part \ref{rep:part1}] \footnote{Alternatively, as Anastasia Vikulova points out, one can also establish that $B^{(1)}$ is the anticanonical model of its desingularization, implying that $B$ admits a quartic model in the same way.} As described above, the automorphism group $\Aut(B)$ of $B$ is some form of $\PSp_4(\FF_3)$. Thus, by Hilbert 90, there is a representation $\rho_{10}$ of $\Aut(B)$, giving a linear projective action of $\Aut(B)$ on $\PP^9$. Similarly, the decomposition $\Sym^2\rho_{10}=\rho_5\oplus\rho_{30}\oplus\rho_{20}$ yields an $\Aut(B)$-stable $19$-dimensional linear system of quadrics on $\PP^9$ defining a $3$-dimensional Brauer-Severi variety $M\subset\PP^9$, together with a covariant map $\pi\colon M\dashrightarrow\PP^4$ from $\rho_5$. Its image then yields a quartic model for $B$. Over $\ksep$, this model differs from $B^{(1)}$ by a linear transformation, so $B$ also has a singular locus of dimension $0$ and degree $45$.
	\item[Part \ref{rep:part2}] We have already constructed the map $\pi$ above. Base changing to $\ksep$ does not change its generic degree, and there it agrees with the standard expression of the Maschke $\PP^3$ over $B^{(1)}$.
	\item[Part \ref{rep:part3}] By definition, $\Ob(B)$ is the class of the Brauer-Severi variety $M$. From the cohomological description, it is clear that $\Ob(B)\in \HH^2(k,\mu_2)=\Br(k)[2]$, so its period divides $2$. Since $M$ is a Brauer-Severi variety of dimension $3$, its isomorphism class is represented by an element of $\HH^1(k,\PGL_4(\ksep))$, which also classifies $16$-dimensional central simple algebras. Such an algebra is split by an extension of degree dividing $4$. \qedhere
\end{description}
\end{proof}

\begin{remark}\label{R:singmod}
The moduli interpretation of $B$ extends to products of elliptic curves as well: the blow-up of each of the $45$ nodal singularities of $B$ yields a component of the locus of $A_2(3)$ corresponding to products of elliptic curves. Indeed, $\PSp_4(\FF_3)$ has a unique conjugacy class of index $45$ subgroups, which are the stabilizers of decompositions of its standard representation into non-isotropic $2$-dimensional subspaces.

The tangent cone of a node $s$ on $B$ is a cone over a non-singular quadric $V\subset \PP^3$. Each node lies on eight $j$-planes, which map to four lines of each ruling on $V$. A choice of point $\alpha$ on $V$ marks a distinguished point and one line from each ruling by intersection with the tangent plane. Each $4$-tuple of lines cut out by $j$-planes cuts out the locus of a $3$-division polynomial on one of the lines which, together with the marked intersection point, determines a $\ksep$-isomorphism class of an elliptic curve. As explained in Remark~\ref{R:elliptic_kummer}, this determines an elliptic Kummer surface up to twist, and $\Sigmabar$ somehow encodes which twist. We have not found a direct way of reading off the full information of the Kummer surface in this situation, but on general principles we know its obstruction will be $\Ob(\Sigmabar)$.
\end{remark}

\section{Period-index questions about obstructions}

\subsection{Proof of Theorem~\ref{T:burkhardt_ob}}
\label{S:proof_burkhardt_ob}

We restate Theorem~\ref{T:burkhardt_ob} and give its proof.

\thmburkhardtob*

\begin{proof}
\begin{description}
	\item[Part \ref{ob:part1}] Note that the isomorphism classes of Burkhardt quartics as well as of Kummer $3$-level structures are classified by $\HH^1(k,\PSp_4(\FF_3))$. The synthetic description described in Section~\ref{S:moduli_construction} gives a way, given a point $\alpha$ on $B\setminus \He(B)$, to construct a Kummer surface $K_\alpha$ with the requisite level structure. In particular, we can do so at the generic point, to get a universal family over an open part of $B$.
	\item[Part \ref{ob:part2}] As noted in Section~\ref{S:3levelstructure}, for a Kummer $3$-level structure $\Sigmabar$ on a quartic Kummer surface, we have $\Ob(\Sigmabar)=\Ob(K)$. Furthermore, we have $\Ob(B)=\Ob(\Sigmabar)$.
	\item[Part \ref{ob:part3}] By \cite{BruinNasserden2018}*{Proposition~2.8}, the choice of $j$-plane allows the construction of a cubic genus $1$ curve together with a cubic map to $\PP^1$, such that $C_\alpha$ is the discriminant curve of the cubic extension. This directly determines a model of $C_\alpha$ over $k$, so there is no obstruction for $C_\alpha$ and therefore $\Ob(B)=1$.
	\item[Part \ref{ob:part4}] Note that $\Ob(\Sigmabar)=\Ob(B)$ is the class of the Maschke $M$ associated to $B$. Hence, if it is trivial then $M\simeq \PP^3$, and $\pi\colon \PP^3\to B$ yields a unirational map. The image $\pi(\PP^3(k))$ is then Zariski-dense. Points in the image correspond to Jacobians of genus $2$ curves with a marked Weierstrass point, i.e., curves that admit a quintic affine model.
	\item[Part \ref{ob:part5}] If $\alpha \in B(k)\setminus \He(B)(k)$, then $\Ob(B)=\Ob(K_\alpha)$ is represented by a conic $Q_\alpha$, and therefore of index at most $2$. Hence, if $\Ob(B)$ is of index $4$ then any rational point on $B$ must lie in $\He(B)$. By \ref{ob:part3} we know that for the field of definition $L$ of any $j$-plane, the restriction of $\Ob(B)$ to $L$ is trivial. If the index of $\Ob(B)$ is $4$, then it follows that $L$ has degree at least $4$ and hence that any rational point on $\He(B)$ must lie on at least four $j$-planes, the conjugates. But the only points that lie on more than two $j$-planes are the singular points of $B$. Furthermore, by Remark~\ref{R:singmod} we see that the special fibre of the blow-up of $B$ at any one of these singularities has a modular interpretation as well. By Remark~\ref{R:elliptic_kummer} we see that any rational point on it would lead to a representative of $\Ob(B)$ of index at most $2$, which would contradict that its index is $4$. \qedhere
\end{description}
\end{proof}

\subsection{Proof of Proposition~\ref{P:index}}
\label{S:proof_example}

We restate Proposition~\ref{P:index} and give its proof.

\propindex*

\begin{proof}	
Part~\ref{ex1} follows because over finite fields $\Br(k)=0$ and for infinite fields there are abelian varieties with $3$-torsion structure $(\ZZ/3\ZZ)^2\times (\mu_3)^2$.

For Part~\ref{ex2} we let $\sigma_1,\sigma_4\in k[x_1,\ldots,x_6]$ be the elementary symmetric functions of degrees $1$ and $4$ respectively. Then $B'\colon \sigma_1=\sigma_4=0$ is also a Burkhardt quartic threefold, lying in the hyperplane $\sigma_1=0$ inside $\PP^5$. We have $\alpha=(40:-30:-8:-5:3:0)\in B'(\QQ)$ and $Q_\alpha$ is isomorphic to the plane conic $3x^2+y^2+z^2=0$. This conic is not isomorphic to $\PP^1$ over $\QQ$.

For Part~\ref{ex3}
we take $k=\RR(s,t)$, a bivariate function field. We take a twist of $B^{(1)}$ that is isomorphic to $B^{(1)}$ over $k(\sqrt{s},\sqrt{t})$, by setting
\[\begin{aligned}
y_0&=z_0\\
y_1&=z_1+z_2\sqrt{s}+z_3\sqrt{t}+z_4\sqrt{st}\\
y_2&=z_1-z_2\sqrt{s}+z_3\sqrt{t}-z_4\sqrt{st}\\
y_3&=z_1+z_2\sqrt{s}-z_3\sqrt{t}-z_4\sqrt{st}\\
y_4&=z_1-z_2\sqrt{s}-z_3\sqrt{t}+z_4\sqrt{st}.
\end{aligned}
\]
This yields the model $B''$ as stated.

Note that $\Ob(B'')$ is an element of period $2$ and that it trivializes upon base change to $k(\sqrt{s},\sqrt{t})$. Also note that $B''$ is actually defined over $k[s,t]$ and has good reduction outside $st=0$. We write $R=\RR[s,s^{-1},t,t^{-1}]$.  We see that $\Ob(B)\in \Br(R)$. Since $R$ is a regular domain with fraction field $k$, base extension gives a natural injection $\Br(R)\to \Br(k)$ and we identify $\Br(R)$ with its image in $\Br(k)$.

\begin{lemma}\label{L:brRst}
The group $\Br(R)[2]$ is generated by \[(-1,-1),(-1,s),(-1,t),(s,t).\]
\end{lemma}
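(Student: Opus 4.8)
The plan is to compute the $2$-torsion of the Brauer group of the regular ring $R=\RR[s,s^{-1},t,t^{-1}]$ by identifying $R$ as the coordinate ring of the $2$-dimensional torus $\mathbb{G}_m^2$ over $\RR$ and exploiting the known structure of Brauer groups of such arithmetically simple schemes. First I would set up the comparison between $\Br(R)[2]$ and Galois cohomology: since $R$ is a regular integral domain with fraction field $k=\RR(s,t)$, the natural map $\Br(R)\to\Br(k)$ is injective, so it suffices to pin down which classes in $\Br(k)[2]$ extend (are unramified) over $R$. The claim is that the four quaternion classes $(-1,-1),(-1,s),(-1,t),(s,t)$ generate this group. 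That these classes lie in $\Br(R)[2]$ is immediate: each is a quaternion symbol in units of $R$ (note $-1,s,t\in R^\times$), so each is unramified away from $st=0$ and hence defines a class in $\Br(R)$.

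The core of the argument is the reverse containment, that these four classes span. The approach I would take is to use the Gysin/purity sequence (or equivalently the Faddeev-type exact sequence for $2$-dimensional regular schemes) relating $\Br(R)$ to the Brauer group of $\RR$ itself, the Brauer groups of the residue fields of the removed divisors, and the Picard/units data. Concretely, $\Spec R=\mathbb{G}_m^2$ over $\RR$, and one has $\Br(\RR)=\ZZ/2$ generated by $(-1,-1)$. The Kummer sequence for $\mu_2$ gives, via $\HH^2(\mathbb{G}_m^2,\mu_2)$, a filtration whose graded pieces are built from $\HH^2(\RR,\mu_2)$, two copies of $\HH^1(\RR,\mu_2)=\RR^\times/\RR^{\times2}=\{\pm1\}$ (accounting for the symbols $(-1,s)$ and $(-1,t)$, i.e.\ the "mixed" classes pairing the constant $-1$ with each coordinate), and a single copy of $\HH^0(\RR,\mu_2)=\mu_2$ (accounting for the cup product $(s,t)$ of the two coordinate classes). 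I would assemble these contributions and match each graded piece to one of the four named generators, checking that the extension data forces no further classes and that the listed symbols are independent.

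The step I expect to be the main obstacle is controlling the boundary (residue) maps precisely enough to rule out any \emph{extra} generators and to verify there are no unexpected relations, i.e.\ that the sequence computing $\Br(R)[2]$ really splits into exactly these four one-dimensional $\FF_2$-pieces. In particular I need that $\HH^2(\RR,\mu_2)$ contributes only $(-1,-1)$, that the contribution of $\HH^3$ or higher degree terms vanishes (which uses $\mathrm{cd}_2(\RR)$ being small enough after inverting $s,t$, or a direct splitting argument), and that the class $(s,t)$ genuinely survives rather than being killed by a differential. This is exactly the kind of subtle point where the acknowledgement to Asher Auel is relevant, so I would expect to invoke a clean structural input about Brauer groups of tori over real closed or, more simply, over $\RR$ fields. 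An alternative route, which I would keep in reserve as a cross-check, is to argue ramification-theoretically: any class $\xi\in\Br(R)[2]\subseteq\Br(k)[2]$ has trivial residue along every height-one prime of $R$; computing residues of a general quaternion class along the divisors $s=0$ and $t=0$, and using that $\Br(\RR(s))$ and $\Br(\RR(t))$ are understood, would let me reduce $\xi$ modulo the four named symbols to a class unramified everywhere on $\PP^1_\RR\times\PP^1_\RR$ pulled back from $\Br(\RR)=\langle(-1,-1)\rangle$, finishing the proof.
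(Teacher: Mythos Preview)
Your proposal is correct, and both routes you sketch would succeed, but the paper's argument is closest to your \emph{alternative} (ramification-theoretic) route, organized iteratively rather than globally. Concretely, the paper first proves that for any characteristic-zero field $K$ one has a split exact sequence
\[0\to\Br(K)[2]\to\Br(K[t,t^{-1}])[2]\to K^\times/K^{\times2},\]
using $\Br(K[t])=\Br(K)$ (Auslander--Goldman) and the residue at $(t)$, with splitting $a\mapsto(a,t)$. It then applies this with $K=\RR(s)$ to write any class as $B\otimes(a,t)$; specializing $t=1$ forces $B$ into $\Br(\RR[s,s^{-1}])$, whereupon the same one-variable result with $K=\RR$ gives $B=C\otimes(b,s)$ with $C\in\Br(\RR)[2]$, $b\in\RR^\times$. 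A short residue calculation at primes dividing $a$ then shows $a\in\{c,cs\}$ for some $c\in\RR^\times$, and the statement follows from $\Br(\RR)=\langle(-1,-1)\rangle$ and $\RR^\times/\RR^{\times2}=\langle-1\rangle$.

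Compared to your primary K\"unneth/Leray approach, the paper's argument is more elementary: it avoids any spectral sequence and the attendant degeneration checks you flagged as the main obstacle, at the cost of being slightly ad hoc (two passes through a one-variable lemma plus a specialization trick). Your structural approach would explain more conceptually why the answer decomposes into exactly four $\FF_2$-pieces, but requires the extra care you anticipated. Either way the content is the same; the paper simply chooses the low-tech path.
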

\begin{proof}
For a field $K$ of characteristic $0$ we first establish the structure of $\Br(K[t,t^{-1}])$. We first note that $\Br(K[t])=\Br(K)$ by \cite{AuslanderGoldman1960}*{Proposition~7.7}. The residue map at the prime ideal $(t)$ then gives us the exact sequence
\[0\to\Br(K[t])\to \Br(K[t,t^{-1}])\to K^\times/K^{\times 2},\]
which is split by the map $K^\times \to \Br(K[t,t^{-1}])[2]$ defined by $a\mapsto (a,s)$. It follows that a class in $\Br(K[s,s^{-1}])[2]$ can be represented by
\[A =  B \otimes (a,t) \text{ with } B\in\Br(K)[2] \text{ and }a\in K^\times\]

We apply this result for $K=\RR(s)$ to see that classes in $\Br(R)\subset\Br(\RR(s)[t,t^{-1}])$ can be represented by
\[A =  B \otimes (a,t) \text{ with } B\in\Br(\RR(s))[2] \text{ and }a\in \RR(s)^\times\]
Note that evaluation at $t=1$ yields a homomorphism $\Br(R)\to \Br(\RR[s,s^{-1}])$ which sends the class of $A$ to that of $B$, so in fact we see that $B$ must represent a class in $\Br(\RR[s,s^{-1}])$, which, using the result on Laurent polynomial rings, yields that
\[ A = C \otimes (b,s) \otimes (a,t) \text{ with } C\in\Br(\RR)[2], b\in \RR^\times, a\in \RR(s)^\times.\]
Without loss of generality we can take $a\in \RR[s]$ and square-free. Let $p$ be an irreducible factor of $a$. If $(p)\neq(s)$ then $A$ must be Azumaya at $(p)$. Let $L=\RR(s)[t]/(p)$ be the corresponding residue field. The residue map $\Br(\RR(s,t))[2]\to L^\times/L^{\times2}$ sends $A$ to $t$ or $st$, depending on whether $p$ divides $b$. Neither is a square, so it follows that $a=cs$ or $a=c$ for some $c\in\RR$. The statement now follows from the fact that $\Br(\RR)=\langle (-1,-1)\rangle$ and $\RR^\times/\RR^{\times 2}=\langle -1\rangle$.
\end{proof}


We note that the product of restriction maps
\[\Br(k)\to \Br(k(\sqrt{s}))\times\Br(k(\sqrt{t}))\times\Br(k(\sqrt{st}))\]
is injective on $\Br(R)$.
We compute the restriction to $\Br(k(\sqrt{s}))$ by specializing to $s=1$. The intersection with $z_3=z_4=0$ yields a genus $0$ curve on $B''$, with the point $\alpha=(16:-31:9:0:0)$ outside $\He(B'')=0$. We find that $Q_\alpha$ is equivalent to $X^2-tY^2+Z^2=0$, and therefore that the restriction of $\Ob(B'')$ to $k(\sqrt{s})$ is $(-1,t)$. Symmetry gives us that specializing $t=1$ should yield $(-1,s)$ and specializing $st=1$ should yield $(-1,s)=(-1,t)$. Computation shows
$\Ob(B'')=(-1,t)\otimes(-1,s)\otimes(s,t)=(-1,s)\otimes(-s,t)$. We describe two ways to verify that this class is of index four.

First, one can simply enumerate all the classes of index at most two, since they will be of the form $(a,b)$, where $a,b$ lie in the multiplicative group generated by $\{-1,s,t\}$. Given that $(a,a)=(-1,a)$, we see there are $\binom{7}{2}+1$ choices, but many represent equivalent classes. The classes that are not covered (and hence must be of index four) are
\[(-1,-1)\otimes (s,t),\; (-1,-1)\otimes (s,-t),\; (-1,-1)\otimes (-s,t),\; ( -1,s)\otimes(-s,t).\]
Alternatively, one can use that a biquaternion algebra is of index four if and only if its \emph{Albert form} is anisotropic, see \cite{Lam2005}*{Albert's Theorem 4.8}. In fact, Albert's original example \cite{Albert1932}*{Theorem~1} applies directly to $(s,s)\otimes(t,st)$, which is equivalent to our algebra. 

It follows from Theorem~\ref{T:burkhardt_ob}\ref{ob:part5} that all rational points of $B''$ lie in the singular locus. The only rational point there is $(1:-1:0:0:0)$.

In order to show that the desingularization of $B''$ has no rational points at all, we can also directly look at the blow-up. The tangent cone to $B''$ at $(1:-1:0:0:0)$ is the affine cone over the quadric 
\[su_0^2+tu_1^2+stu_2^2-3u_3^2=0,\]
which is indeed easily checked to have no non-zero solutions over $\CC[[s,t]]$: a non-zero solution can be reduced to a solution with coprime coordinates, but by solving for coefficients successively, one finds that all $u_i$ must be divisible by $s$ and $t$. \qedhere
\end{proof}

\subsection{Proof of Proposition~\ref{P:density}}\label{S:proof_density}
We restate Proposition~\ref{P:density} and give its proof.

\propdensity*

\begin{proof}
First note that $B'$ has $15$ rational singularities, constituting the orbit of $(1:-1:0:0:0:0)$ under the action of $S_6$ on the coordinates. These singularities form $20$ triples of collinear points. The lines lie in $\He(B')\cap B'$. For instance, the singularities $(1:-1:0:0:0:0),(1:0:0:0:0:-1),(0:1:0:0:0:-1)$ lie on the line $L_{345}=x_3=x_4=x_5=\sigma_1=0$. We consider the $2$-dimensional linear system of planes $V_{u,v}$ in $\sigma_1=0$ containing this line, defined by
\[V_{u,v}\colon x_3-ux_5=x_4-vx_5=\sigma_1=0.\]
The intersection $V_{u,v}\cap B'$ decomposes into the line $L_{345}$ and the plane cubic $C_{u,v}$ stated in the proposition. It is straightforward to check that the singularities give rise to three collinear flexes on $C_{u,v}$. By choosing one of those flexes as zero-section, we see that $C_{u,v}$ is an elliptic threefold with $3$-torsion. This yields a birational elliptic fibration on $B'$. In fact, the different choices of triples of collinear singularities on $B'$ give us $20$ such fibrations. In order to establish density of rational points on $B'$ we use a standard trick combining these multiple fibrations. We intersect $B'$ with increasing linear spaces and establish density in each iteratively.

It is straightforward to check that $V_{3/5,4}$ passes through the point $P_0=(20: 2: -9: -60: 15: 32)\in B'(\QQ)$ and that it yields a
non-torsion point on $C_{3/5,4}$. Since the rational points on the line $x_3=x_4=x_5=\sigma_1=0$  are definitely dense, this gives that rational points on $B'$ are dense in the intersection of $B'$ with the plane spanned by $P_0$ and $L_{345}$. 

Next, we pick the fibration generated by planes through $L_{245}\colon x_2=x_4=x_4=\sigma_1=0$.
The fibers that intersect the plane above form a one-dimensional family of elliptic curves.
The density established above yields infinitely many members in this family that have a rational point besides those arising from the three torsion sections. In only finitely many of those can this be torsion, so most of those fibers have infinitely many rational points themselves. This yields Zariski density of rational points in $B'$ intersected by the $3$-space spanned by $P_0$, $L_{345}$ and $L_{245}$.

We repeat this trick once more using the fibration generated by planes through $L_{145}\colon x_1=x_4=x_5=\sigma_1$. The multi-section obtained by intersecting with the $3$-space above is generically non-torsion, so there is a proper sublocus where it reduces to torsion of order, say, at most $12$ (the largest that can occur over $\QQ$). The result above shows there is a Zariski-dense set of fibers that have an extra rational point arising from this multi-section, and it follows a Zariski-dense subset has positive rank. This yields Zariski-density of rational points on $B'$.
\qedhere
\end{proof}

\section{Computational considerations}

In this section we collect some remarks concerning explicit construction of the various objects considered. First, for a Burkhardt quartic $B\subset \PP^4$ over a field $k$ of characteristic different from $2,3,5$ and a point $\alpha\in B(k)$ outside the Hessian locus, we can readily construct a conic $Q_\alpha$ representing $\Ob(B)\in \Br(k[2])$: as discussed in Section~\ref{S:moduli_construction}, the intersection of polars $P_\alpha^{(2)}\cap P_\alpha^{(3)}$ yields a cone over a plane conic $Q_\alpha$ and $P_\alpha^{(1)}$ cuts out a degree six locus on $Q_\alpha$. The conic represents $\Ob(B)$ and the degree six locus marked on it allows the recovery of $C_\alpha$, if the obstruction is trivial.

More directly, the enveloping cone at $\alpha$ of $P_\alpha^{(1)}$ is a cone over $K_\alpha^*$, with the Kummer $3$-level structure marked on it by $\He(B)$, as described in Section~\ref{S:moduli_construction} as well.

In order to explicitly recover $\pi\colon M\dashrightarrow B$ from an explicitly given Burkhardt quartic $B\subset \PP^4$ one can determine $\Gammabar\subset \SL_5(k)$ that preserves the singular locus. This will be isomorphic to $\PSp_4(\FF_3)$ as a group and we have a natural $5$-dimensional $k$-vectorspace $V_5$ with an action of $\Gammabar$. Following the remarks in Section~\ref{S:representation_theory}, we consider $V_{10}=(\bigwedge^{\!2}V_5)^\vee$. Then $\PP(V_{10})$ yields the natural projective space $\PP^9$ containing $M$. We have that $\Sym^2 V_{10}$ decomposes as a $\Gammabar$-module into a $20$-dimensional space of quadratic forms on $\PP^9$ defining $M$, a $5$-dimensional space of quadratic forms yielding the map $\pi\colon M\dashrightarrow B$, and a remaining $30$-dimensional space.

The general representation theory of $\Sp_4(\FF_3)$ and $\PSp_4(\FF_3)$ implies that $M\subset \PP^9$ is a three-dimensional Brauer-Severi variety and that its class in $\Br(k)$ is the same as $\Br(B)$, but reading off important characteristics such as period and index may not be so simple from this explicit representation of $M$ as a variety in $\PP^9$.

Finally, given some explicit representation of $\xi\in \HH^1(k,\PSp_4(\FF_3))$, Theorem~\ref{T:burkhardt_rep}\ref{rep:part1} asserts the existence of a corresponding quartic model $B^{(\xi)}\subset \PP^4$. Whether it is easy to construct $B$ depends on how $\xi$ is specified. If we can somehow get the representation $\rho_5$ from it, then the corresponding Burkhardt quartic form is just the unique quartic invariant form (up to scaling). Generally, specifying just the splitting field of $\Gammabar$, for instance by specifying a degree-$40$ algebra, does not fully capture $\xi$, because it does not specify the pairing information.

There is one case where simply specifying an algebra is sufficient: there is a unique subgroup conjugacy class in $\PSp_4(\FF_3)$ of order $720$, represented by $\Sym(6)$ acting by permutation on the coordinates of $B'$.
For a square-free degree-$6$ polynomial
\[h(T)=(T-\beta_1)\cdots(T-\beta_6)\]
we set
\[\tilde{x}_i=x_1+\beta_ix_2+\cdots+\beta_i^5x_6\text{ for }i=1,\ldots,6.\]
It follows that a permutation on the $\beta_i$ has the same permutation action on the $\tilde{x}_i$. Furthermore, we see that the elementary symmetric functions $\tilde{\sigma}_1$ and $\tilde{\sigma}_4$ in the $\tilde{x}_i$ have coefficients that are symmetric in the $\beta_i$ and therefore can be expressed in terms of the coefficients of $h$. Thus we see that $\tilde{\sigma}_1=\tilde{\sigma}_4=0$ in $\PP^5$ defines a Burkhardt quartic that over the splitting field of $h$ is isomorphic to $B'$. This provides an easy way of directly constructing twists for Burkhardt quartics. The example $B''$ presented above is also of this type, although the model given makes use of the smaller splitting field.

\begin{bibdiv}
\begin{biblist}

\bib{AuslanderGoldman1960}{article}{
	author={Auslander, Maurice},
	author={Goldman, Oscar},
	title={The Brauer group of a commutative ring},
	journal={Trans. Amer. Math. Soc.},
	volume={97},
	date={1960},
	pages={367--409},
	issn={0002-9947},
}

\bib{Albert1932}{article}{
	author={Albert, A. A.},
	title={A construction of non-cyclic normal division algebras},
	journal={Bull. Amer. Math. Soc.},
	volume={38},
	date={1932},
	number={6},
	pages={449--456},
	issn={0002-9904},
}

\bib{BoxCalGeePil2021}{article}{
	author={Boxer, George},
	author={Calegari, Frank},
	author={Gee, Toby},
	author={Pilloni, Vincent},
	title={Abelian surfaces over totally real fields are potentially modular},
	journal={Publ. Math. Inst. Hautes \'{E}tudes Sci.},
	volume={134},
	date={2021},
	pages={153--501},
	issn={0073-8301},
}

\bib{BruinNasserden2018}{article}{
	author={Bruin, Nils},
	author={Nasserden, Brett},
	title={Arithmetic aspects of the Burkhardt quartic threefold},
	journal={J. Lond. Math. Soc. (2)},
	volume={98},
	date={2018},
	number={3},
	pages={536--556},
	issn={0024-6107},
}

\bib{Burkhardt1891}{article}{
	author={Burkhardt, Heinrich},
	title={Untersuchungen aus dem Gebiete der hyperelliptischen Modulfunctionen},
	language={German},
	journal={Math. Ann.},
	volume={38},
	date={1891},
	number={2},
	pages={161--224},
	issn={0025-5831},
}

\bib{CalChid2020}{article}{
	author={Calegari, Frank},
	author={Chidambaram, Shiva},
	title={Rationality of twists of the Siegel modular variety of genus 2 and level 3},
	journal={Arxiv preprint arXiv:2009.00194},
	eprint={https://arxiv.org/abs/2009.00194},
	date={2020},
}

\bib{CalChidRob2020}{article}{
	author={Calegari, Frank},
	author={Chidambaram, Shiva},
	author={Roberts, David P.},
	title={Abelian surfaces with fixed 3-torsion},
	conference={ title={ANTS XIV---Proceedings of the Fourteenth Algorithmic Number Theory Symposium}, },
	book={ series={Open Book Ser.}, volume={4}, publisher={Math. Sci. Publ., Berkeley, CA}, },
	date={2020},
	pages={91--108},
}

\bib{CardonaQuer2005}{article}{
	author={Cardona, Gabriel},
	author={Quer, Jordi},
	title={Field of moduli and field of definition for curves of genus 2},
	conference={ title={Computational aspects of algebraic curves}, },
	book={ series={Lecture Notes Ser. Comput.}, volume={13}, publisher={World Sci. Publ., Hackensack, NJ}, },
	date={2005},
	pages={71--83},
}

\bib{Coble1917}{article}{
	author={Coble, Arthur B.},
	title={Point sets and allied Cremona groups. III},
	journal={Trans. Amer. Math. Soc.},
	volume={18},
	date={1917},
	number={3},
	pages={331--372},
	issn={0002-9947},
}

\bib{Filatov2020}{thesis}{
	title={Brauer-Severi varieties associated to twists of the Burkhardt quartic},
	school={Simon Fraser University},
	author={Filatov, Evgueni},
	eprint={https://theses.lib.sfu.ca/thesis/etd21035},
	note={(M.Sc.~ thesis)},
	year={2020},
}

\bib{GilleSzamuely2006}{book}{
	author={Gille, Philippe},
	author={Szamuely, Tam\'{a}s},
	title={Central simple algebras and Galois cohomology},
	series={Cambridge Studies in Advanced Mathematics},
	volume={101},
	publisher={Cambridge University Press, Cambridge},
	date={2006},
	pages={xii+343},
	isbn={978-0-521-86103-8},
	isbn={0-521-86103-9},
}

\bib{Hunt96}{book}{
	author={Hunt, Bruce},
	title={The geometry of some special arithmetic quotients},
	series={Lecture Notes in Mathematics},
	volume={1637},
	publisher={Springer-Verlag, Berlin},
	date={1996},
	pages={xiv+332},
	isbn={3-540-61795-7},
}

\bib{Klein1888}{article}{
	author={Klein, M. F.},
	title={Sur la résolution, par les fonctions hyperelliptiques, de l'équation du vingt-septième degré, de laquelle dépend la détermination des vingt-sept droites d'une surface cubique},
	journal={Journal de mathématiques pures et appliquées : ou recueil mensuel de mémoires sur les diverses parties des mathématiques},
	date={1888},
	pages={169--176},
}

\bib{Lam2005}{book}{
	author={Lam, T. Y.},
	title={Introduction to quadratic forms over fields},
	series={Graduate Studies in Mathematics},
	volume={67},
	publisher={American Mathematical Society, Providence, RI},
	date={2005},
	pages={xxii+550},
	isbn={0-8218-1095-2},
}

\bib{Maschke1889}{article}{
	author={Maschke, Heinrich},
	title={Aufstellung des vollen Formensystems einer quatern\"{a}ren Gruppe von 51840 linearen Substitutionen},
	language={German},
	journal={Math. Ann.},
	volume={33},
	date={1889},
	number={3},
	pages={317--344},
	issn={0025-5831},
}

\bib{Mestre91}{article}{
	author={Mestre, Jean-Fran\c {c}ois},
	title={Construction de courbes de genre $2$ \`a partir de leurs modules},
	language={French},
	conference={ title={Effective methods in algebraic geometry}, address={Castiglioncello}, date={1990}, },
	book={ series={Progr. Math.}, volume={94}, publisher={Birkh\"auser Boston, Boston, MA}, },
	date={1991},
	pages={313--334},
}

\bib{milne:etale_cohomology}{book}{
	author={Milne, James S.},
	title={\'Etale cohomology},
	series={Princeton Mathematical Series},
	volume={33},
	publisher={Princeton University Press},
	place={Princeton, N.J.},
	date={1980},
	pages={xiii+323},
	isbn={0-691-08238-3},
}

\bib{Witting1887}{article}{
	author={Witting, Alexander},
	title={Ueber Jacobi'sche Functionen $k^{ter}$ Ordnung zweier Variabler},
	language={German},
	journal={Math. Ann.},
	volume={29},
	date={1887},
	number={2},
	pages={157--170},
	issn={0025-5831},
}

\end{biblist}
\end{bibdiv}
\end{document}